\documentclass[11pt,twoside]{amsart}
\usepackage{amsmath, amsthm, amsfonts, amssymb, amscd}
\usepackage{graphicx}
\usepackage[ruled,vlined,norelsize]{algorithm2e}
\usepackage{caption}
\usepackage{subcaption}
\usepackage{tikz}
\usepackage{hyperref}
\usepackage{comment}

\theoremstyle{plain}
\newtheorem{theorem}{Theorem}[section]
\newtheorem{lemma}[theorem]{Lemma}
\newtheorem{proposition}[theorem]{Proposition}
\newtheorem{corollary}[theorem]{Corollary}

\theoremstyle{definition}
\newtheorem{definition}[theorem]{Definition}
\newtheorem{example}[theorem]{Example}

\theoremstyle{remark}
\newtheorem{remark}[theorem]{Remark}



\DeclareMathOperator*{\argmax}{arg\,max}
\providecommand{\set}[1]{\{#1\}}
\providecommand{\abs}[1]{\left|#1\right|}
\providecommand{\norm}[1]{\left\lVert#1\right\rVert}
\newcommand{\field}[1]{\mathbb{#1}}
\newcommand{\R}{\field{R}}

\newcommand{\prj}{\operatorname{proj}}
\renewcommand{\ker}{\operatorname{ker}}
\newcommand{\im}{\operatorname{im}}

\setcounter{MaxMatrixCols}{20}

\title{Random Walks on Simplicial Complexes and Harmonics}
\author{Sayan Mukherjee}
\address{Sayan Mukherjee, Departments of Statistics, Mathematics, and Computer Science, Duke University}
\author{John Steenbergen}
\address{John Steenbergen, Department of Mathematics, Duke University}
\date{\today}

\begin{document}

\begin{abstract}
In this paper, we introduce random walks with absorbing states on simplicial complexes.  Given a simplicial complex of dimension $d$, a random walk with an absorbing state is defined which relates to the spectrum of the $k$-dimensional Laplacian for $1 \leq k \leq d$ and which relates to the local random walk on a graph defined by Fan Chung.  We also examine an application of random walks on simplicial complexes to a semi-supervised learning problem. Specifically, we consider a label propagation algorithm on oriented edges, which applies to a generalization of the partially labelled classification problem on graphs.
\end{abstract}

\maketitle

\section{Introduction}
\subsection{Background}

The relation between spectral graph theory and random walks on graphs has been well studied and has both theoretical and practical implications 
\cite{chung1997spectral,Lovasz96,MeilaShi01}. A classic example of this relation is graph expansion (see \cite{hoory2006expander}). 
Loosely speaking, graph expansion measures how far a graph is from being disconnected (i.e., having a nontrivial reduced $0$-th homology class).  The two common characterizations of graph expansion use either the Cheeger number which relates to spectral graph theory or the mixing time of a random walk on the graph.

In this paper we examine an analagous relation between random walks on simplicial complexes and spectral properties of
higher order Laplacians. A simplicial complex is a higher-dimensional generalization of a graph consisting of vertices and edges as well as higher-dimensional simplices such as triangles and tetrahedra.  The graph Laplacian was generalized to simplicial complexes by Eckmann \cite{eckmann1944harmonische}, resulting in what are called higher order combinatorial Laplacians. The $k$-th order combinatorial Laplacian, or $k$-Laplacian, can be used to study expansion in the sense that the spectrum of the $k$-Laplacian provides information on how far from the complex is from having a nontrivial $k$-th (co)homology class. The graph Laplacian is simply the $0$-th order combinatorial Laplacian.  There has been recent work extending Cheeger numbers and random walks to higher dimensions \cite{dotterrer2012coboundary, lubotzky2013ramanujan, parzanchevski2012isoperimetric, parzanchevski2012simplicial, steenbergen2012cheeger}. 

The $k$-Laplacian is naturally decomposed into two parts commonly called the up $k$-Laplacian and the down $k$-Laplacian. The graph case is an exception in that there is only an up $0$-Laplacian; the down $0$-Laplacian is the zero matrix. This fact suggests that a straightforward generalization of the theory of graph expansion to higher dimensions may only relate to the up $k$-Laplacian.  Indeed, the Cheeger number of a graph was initially generalized so as to relate to the up $k$-Laplacian \cite{dotterrer2012coboundary}, with the generalization to the down $k$-Laplacian following soon after \cite{steenbergen2012cheeger}.

This decomposition also appears when studying random walks on simplicial complexes. In a recent paper, Rosenthal and Parzanchevski 
\cite{parzanchevski2012simplicial} generalized random walks on graphs to random walks on simplicial complexes.  They defined a Markov chain on the space of oriented $k$-simplexes that reflects the spectrum of the up $k$-Laplacian, assuming $0 \leq k \leq d-1$ where $d$ is the dimension of the simplicial complex.  The walk traverses the simplicial complex by moving between oriented $k$-simplexes via shared  $(k+1)$-simplexes.  In this paper we define a random walk that traverses the simplicial complex by traveling through shared $(k-1)$-simplexes. We demonstrate that this random walk is related to the spectrum of the down $k$-Laplacian and reflects the dimension of the $k$-th homology group over $\R$, assuming $1 \leq k \leq d$.  We also discuss the possibility of defining other random walks on simplicial complexes, including random walks relating to the full $k$-Laplacian and weighted Laplacians. We also apply random walks on simplicial complexes to a semi-supervised learning problem, propagating labels on edges. This generalizes the semi-supervised learning idea of propagating labels on nodes.

\subsection{Motivation}

We have two motivations for studying the random walk corresponding to the down Laplacian. The first motivation comes from an example.  	Consider the 2-dimensional simplicial complex formed by a hollow tetrahedron (or any triangulation of the 2-sphere).  We know that the complex has nontrivial 2-dimensional homology since there is a void.  However, this homology cannot be detected by the random walk defined in \cite{parzanchevski2012simplicial}, because there are no tetrahedrons that can be used by the walk to move between the triangles.  In general, the walk defined in \cite{parzanchevski2012simplicial} can detect homology from dimension 0 to co-dimension 1, but never co-dimension 0.  Hence, a new walk which can travel from triangles to triangles through edges is needed.
 
The second motivation relates to the geometry of random walks or diffusions and manifolds. The geometry captured by the graph Laplacian
as well as the Cheeger number and random walks on the graph have direct connections to the geometry of a manifold with  Neumann boundary
conditions. We will examine random walks that have connections to the geometry of a manifold with Dirichlet boundary conditions, denoted as ``Dirichlet'' random walks. Work by Fan Chung in \cite{chung2007random} has shown that there are alternative notions of the Laplacian and random walks on graphs that capture a Dirichlet-flavored geometry of graphs.  The definition of the ``local'' Cheeger number of a graph given in \cite{chung2007random} bears a striking resemblance to the definition of the Cheeger number of a manifold with Dirichlet boundary \cite{cheeger1970lower}. Also defined in \cite{chung2007random} is a ``local'' random walk that satisfies a Dirichlet boundary condition. In contrast, the usual random walk on a graph might be called Neumann.  The random walk defined by Rosenthal and Parzanchevski \cite{parzanchevski2012simplicial} generalizes the Neumann random walk to higher dimensions on simplicial complexes. In this paper we generalize the Dirichlet random walk.

\subsection{Summary of Results}
In this section we give a short summary of the main results.  Precise definitions of the terms used are given in section \ref{sec: Def}.  

In section \ref{sec: Main} we define a $p$-lazy Dirichlet random walk on the oriented $k$-simplexes of a $d$-dimensional simplicial complex $X$, where $1 \leq k \leq d$. This walk has a corresponding probability transition matrix $P$. In most analyses of random walks the questions of interest are convergence and rates of convergence of $\lim_{n \rightarrow \infty}P^n \nu  = \pi,$ where $\nu$ is the initial probability distribution on the states, $P^n \nu$ is the marginal distribution after $n$ steps of the walk, and $\pi$ is the stationary or invariant distribution.  For the usual random walk on a graph, the graph Laplacian is used to study the limiting behavoir of $P^n \nu$.  For the random walks we consider, orientation issues prevent a straightforward connection between the $k$-Laplacian and $P^n \nu$.  Instead, we find a connection between the $k$-Laplacian and $CTP^n \nu$ where $C$ is a constant and $T$ is a linear transformation.  The linear transformation $T$ enforces antisymmetry between the opposite orientations of a simplex.  Denoting $\sigma_+$ and $\sigma_-$ as the (arbitrarily chosen) positive and negative orientations of a simplex $\sigma$, $TP^n \nu$ is a function on the set of positively oriented simplexes such that
\[T  P^n \nu(\sigma_+) = P^n \nu(\sigma_+) - P^n \nu(\sigma_-).\]
The constant $C$ is a normalizing constant that ensures $CTP^n \nu$ has nontrivial limiting behavior.  Letting $M$ denote the maximum 
number of $k$-simplexes any $(k-1)$-simplex is contained in,
\[ C = \frac{M-1}{p(M-2)+1}. \]
Let $\textbf{1}_{\tau}$ denote the initial distribution supported on the oriented simplex $\tau$ and let $\widetilde{\mathcal{E}}^\tau_n := CTP^n \textbf{1}_{\tau}$. The down $k$-Laplacian is $L_k^{\text{down}} = \delta^{k-1}\partial_k$ where $\delta$ is a coboundary operator and $\partial$ is the boundary operator, and let $\lambda_k$ denote the smallest eigenvalue of $L_k^\text{down}$ with eigenvector perpendicular to $\im \partial_{k+1}$. The following proposition is a direct result of Theorem \ref{thm: Main}. 

\begin{proposition}
If $\frac{M-2}{3M-4}<p<1$, then the limit $\widetilde{\mathcal{E}}^\tau_\infty := \lim_{n \to \infty} \widetilde{\mathcal{E}}^\tau_n$ exists for all initial $\tau$.  In this case, the $k$-th homology group of $X$ with
coefficients in $\R$ is trivial if and only if $\widetilde{\mathcal{E}}^\tau_\infty \in \im \partial_{k+1}$ for all $\tau$.  In addition, if $p \geq \frac{1}{2}$ then
\[ \norm{\widetilde{\mathcal{E}}_n^{\tau} - \widetilde{\mathcal{E}}_\infty^{\tau}}_2  = O\left(\left[1-\frac{1-p}{(p(M-2)+1)(k+1)}\lambda_k\right]^n\right). \]
\end{proposition}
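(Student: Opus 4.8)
The plan is to read off the whole statement from the spectral picture furnished by Theorem~\ref{thm: Main}. Under the identification of orientation-antisymmetric functions on oriented $k$-simplices with the chain space $C_k$, that theorem gives the closed form $\widetilde{\mathcal{E}}^\tau_n = \mathcal{P}^{\,n} v_\tau$, where $v_\tau := \widetilde{\mathcal{E}}^\tau_0 \in C_k$ is a fixed chain determined by the starting simplex $\tau$ and $\mathcal{P} := I - c\,L_k^{\mathrm{down}}$ with $c := \frac{1-p}{(p(M-2)+1)(k+1)}$; moreover $\mathcal{P}$ is self-adjoint for the standard inner product on $C_k$, since $L_k^{\mathrm{down}} = \partial_k^{\ast}\partial_k$ is positive semidefinite and $c>0$. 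Thus everything reduces to controlling the eigenvalues of $L_k^{\mathrm{down}}$, together with the observation that the chains $\{v_\tau\}_\tau$ span $C_k$ (each $v_\tau$ being, up to the conjugating isomorphism, a multiple of the basis chain attached to $\tau$).

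First I would record the spectral bound $\norm{L_k^{\mathrm{down}}} \le (k+1)M$: the diagonal entry of $L_k^{\mathrm{down}}$ at an oriented $k$-simplex $\sigma$ is $\norm{\partial_k\sigma}^2 = k+1$, and since distinct $k$-simplices share at most one $(k-1)$-face while each $(k-1)$-face lies in at most $M$ of the $k$-simplices, every row has at most $(k+1)(M-1)$ off-diagonal entries, each equal to $\pm 1$; Gershgorin's theorem gives the bound. Since $p \mapsto \frac{1-p}{p(M-2)+1}$ is strictly decreasing, a short computation shows that $p > \frac{M-2}{3M-4}$ is equivalent to $\frac{(1-p)M}{p(M-2)+1} < 2$, i.e.\ to $c\,(k+1)M < 2$, which by the spectral bound forces every eigenvalue $\mu$ of $L_k^{\mathrm{down}}$ to satisfy $c\mu < 2$, hence every eigenvalue of $\mathcal{P}$ into $(-1,1]$. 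A self-adjoint operator with spectrum in $(-1,1]$ has $\mathcal{P}^{\,n}$ converging to the orthogonal projection onto its $1$-eigenspace, which is $\ker L_k^{\mathrm{down}} = \ker\partial_k$; hence $\widetilde{\mathcal{E}}^\tau_\infty = \prj_{\ker\partial_k} v_\tau$ exists for every $\tau$.

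For the homological statement I would invoke the Hodge decomposition $C_k = \im\partial_{k+1} \oplus \mathcal{H}_k \oplus \im\delta^{k-1}$, in which $\ker\partial_k = \im\partial_{k+1}\oplus\mathcal{H}_k$ and the harmonic space $\mathcal{H}_k$ is isomorphic to $H_k(X;\R)$. Then $\widetilde{\mathcal{E}}^\tau_\infty \in \im\partial_{k+1}$ for all $\tau$ exactly when the $\mathcal{H}_k$-component of $\prj_{\ker\partial_k} v_\tau$ vanishes for all $\tau$; since the $v_\tau$ span $C_k$, their images under $\prj_{\ker\partial_k}$ followed by projection to $\mathcal{H}_k$ span $\mathcal{H}_k$, so all these components vanish if and only if $\mathcal{H}_k = 0$, i.e.\ if and only if $H_k(X;\R) = 0$.

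For the rate when $p \ge \frac12$, write $\widetilde{\mathcal{E}}^\tau_n - \widetilde{\mathcal{E}}^\tau_\infty = \mathcal{P}^{\,n}\bigl(v_\tau - \prj_{\ker\partial_k} v_\tau\bigr)$, and note the argument lies in $(\ker L_k^{\mathrm{down}})^{\perp} = \im L_k^{\mathrm{down}}$. On this subspace the eigenvalues of $\mathcal{P}$ are the numbers $1-c\mu$ with $\mu$ a positive eigenvalue of $L_k^{\mathrm{down}}$; since $p\ge\frac12$ gives $\frac{1-p}{p(M-2)+1}\le\frac1M$ and hence $c\,(k+1)M\le 1$, every such value lies in $[\,0,\,1-c\lambda_k\,]$, where $\lambda_k$ is — as in the statement — the smallest eigenvalue of $L_k^{\mathrm{down}}$ with eigenvector perpendicular to $\im\partial_{k+1}$; this is the smallest positive eigenvalue when $H_k(X;\R)=0$, while $\lambda_k = 0$ (and the asserted bound is the trivial $O(1)$) when $H_k(X;\R)\ne 0$. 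Hence the operator norm of $\mathcal{P}$ restricted to $(\ker L_k^{\mathrm{down}})^{\perp}$ is $1-c\lambda_k$, so $\norm{\widetilde{\mathcal{E}}^\tau_n - \widetilde{\mathcal{E}}^\tau_\infty}_2 \le (1-c\lambda_k)^{n}\norm{v_\tau - \prj_{\ker\partial_k} v_\tau}_2 = O\!\bigl([1-c\lambda_k]^{n}\bigr)$, with $c\lambda_k = \frac{1-p}{(p(M-2)+1)(k+1)}\lambda_k$, which is the claim. The only genuinely delicate point is the arithmetic matching the thresholds $\frac{M-2}{3M-4}$ and $\frac12$ to the bound $\norm{L_k^{\mathrm{down}}}\le(k+1)M$ — in particular, that the \emph{bottom} of the spectrum of $\mathcal{P}$, not merely the top, stays under control; everything else is routine self-adjoint spectral theory once Theorem~\ref{thm: Main} is in hand.
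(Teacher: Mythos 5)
Your proposal is correct and follows essentially the paper's own route: you analyze the normalized propagation matrix $\widetilde{B}=I-\frac{1-p}{(p(M-2)+1)(k+1)}L_k^{\text{down}}$ (your $\mathcal{P}$), use $\textup{Spec}(L_k^{\text{down}})\subseteq[0,(k+1)M]$ and $\ker L_k^{\text{down}}=\ker\partial_k$ (Lemma \ref{lem: spec}), translate the thresholds $\frac{M-2}{3M-4}$ and $\frac{1}{2}$ into the spectrum of $\widetilde{B}$ lying in $(-1,1]$ and $[0,1]$ respectively, and conclude via the Hodge decomposition together with the fact that the $\textbf{1}_{\tau_+}$ span $C^k$ --- precisely the chain Proposition \ref{prop: Main}, Corollary \ref{cor: B}, Theorem \ref{thm: Main}, Theorem \ref{thm: Second}. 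The only superficial differences are that you prove the spectral bound by Gershgorin rather than the paper's maximum-entry eigenvector argument, and you are slightly more explicit than the paper that when $H_k\neq 0$ one has $\lambda_k=0$ and the stated rate is the trivial $O(1)$.
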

One difference in the above result with standard results on Markov chains is that the limiting object provides information on the homology of $X$.
This will be discussed further in section \ref{sec: Main}. Another difference is that for a connected graph the random walk is
irreducible, and the limit distribution is independent of the initial distribution.  In higher dimensions, this independence is lost, even for complexes with trivial $k$-th homology over $\R$.

\subsection{Related Work}
Both \cite{dodziuk1984difference,chung2007random} have examined the relation between graph random walks and the geometry of graphs with  Dirichlet
boundary conditions. In section \ref{sec: ex} we show that under certain conditions the Dirichlet random walk in codimension 0 coincides with the notion of a random walk on a graph with Dirichlet boundary.  A natural question to ask concerning random walks on simplicial complexes is: what would be the analogous
process on manifolds? In general we are not aware of results on the continuum limit of these walks. However, the Dirichlet random walk in codimension zero is analogous to the concept of Brownian motion with killing as described by Lawler and Sokal in \cite{lawlersocal88}.

\section{Definitions}
\label{sec: Def}

In this section we define the simplicial complex $X$, the chain and cochain complexes, and the $k$-Laplacian.

\subsection{Simplicial Complexes}
By a simplicial complex we mean an abstract finite simplicial complex.  Simplicial complexes generalize the notion of a graph to higher dimensions.  Given a set of vertices $V$, any nonempty subset $\sigma \subseteq V$ of the form $\sigma = \set{v_0, v_1, \ldots, v_j}$ is called a $j$-dimensional simplex, or $j$-simplex.  A simplicial complex $X$ is a finite collection of simplexes of various dimensions such that $X$ is closed under inclusion, i.e., $\tau \subseteq \sigma$ and $\sigma \in X$ implies $\tau \in X$.  While we will not need it for this paper, one can include the empty set in $X$ as well (thought of as a $(-1)$-simplex).  Given a simplicial complex $X$, denote the set of $j$-simplexes of $X$ as $X^j$.  We say that $X$ is $d$-dimensional or that $X$ is a $d$-complex if $X^d \neq \emptyset$ but $X^{d+1} = \emptyset$.  Graphs are 1-dimensional simplicial complexes.  We will assume throughout that $X$ is a $d$-complex for some fixed $d \geq 1$.

If $\sigma \in X^j$ and $\tau \in X^{j-1}$ and $\tau \subset \sigma$, then we call $\tau$ a \textit{face} of $\sigma$ and $\sigma$ a \textit{coface} of $\tau$.  Every $j$-simplex has exactly $j+1$ faces but may have any number of cofaces.  Given $\sigma \in X^j$ we define $\deg(\sigma)$ (called the \textit{degree} of $\sigma$) to be the number of cofaces of $\sigma$.  Two simplexes are \textit{upper adjacent} if they share a coface and \textit{lower adjacent} if they share a face.  The number of simplexes upper adjacent to a $j$-simplex $\sigma$ is $(j+1) \cdot \deg(\sigma)$ while the number of simplexes lower adjacent to $\sigma$ is $\sum_{\tau \subset \sigma} (\deg(\tau) - 1)$ where the sum is over all faces $\tau$ of $\sigma$.

Orientation plays a major role in the geometry of a simplicial complex.  For $j > 0$, an orientation of a $j$-simplex $\sigma$ is an equivalence class of orderings of its vertices, where two orderings are equivalent if they differ by an even permutation.   Notationally, an orientation is denoted by placing one of its orderings in square brackets, as in $[v_0, \ldots, v_j]$.  Every $j$-simplex $\sigma$ has two orientations which we think of as negatives of each other.  We abbreviate these two orientations as $\sigma_+$ and $\sigma_- = - \sigma_+$ (which orientation $\sigma_+$ corresponds to is chosen arbitrarily).  For $j=0$ there are no distinct orderings, but it is useful to think of each vertex $v$ as being positively oriented by default (so, $v_+ = v$) and having an oppositely-oriented counterpart $v_- := -v$.  For any $j$, we will use $X^j_+ = \set{ \sigma_+ : \sigma \in X^j}$ to denote a choice of positive orientation $\sigma_+$ for each $j$-simplex $\sigma$.  The set of all oriented $j$-simplexes will be denoted by $X^j_\pm$, so that $X^j_\pm = \set{\sigma_{\pm} : \sigma_+ \in X^j_+}$ and $|X^j_\pm| = 2|X^j|$ for any choice of orientation $X^j_+$.

An oriented simplex $\sigma_+ = [v_0, \ldots, v_j]$ induces an orientation on the faces of $\sigma$ as $(-1)^i[v_0, \ldots, v_{i-1},v_{i+1},\ldots,v_j]$.  Conversely, an oriented face $(-1)^i[v_0, \ldots, v_{i-1},v_{i+1},\ldots,v_j]$ of $\sigma$ induces an orientation $\sigma_+ = [v_0, \ldots, v_j]$ on $\sigma$.  Two oriented $j$-simplexes $\sigma_+$ and $\sigma'_+$ are said to be \textit{similarly oriented}, and we write $\sigma_+ \sim \sigma'_+$, if $\sigma$ and $\sigma'$ are distinct, lower adjacent $j$-simplexes and $\sigma_+$ and $\sigma'_+$ induce the opposite orientation on the common face (if $\sigma$ and $\sigma'$ are upper adjacent as well, this is the same as saying that $\sigma_+$ and $\sigma'_+$ induce the same orientation on the common coface).  If they induce the same orientation on the common face, then we say they are \textit{dissimilarly oriented} and write $\sigma_- \sim \sigma'_+$.  We say that a $d$-complex $X$ is \textit{orientable} if there is a choice of orientation $X^d_+$ such that for every pair of lower adjacent simplexes $\sigma, \sigma' \in X^d$, the oriented simplexes $\sigma_+, \sigma'_+ \in X^d_+$ are similarly oriented.

\subsection{Chain and Cochain Complexes}
Given a simplicial complex $X$, we can define the chain and cochain complexes of $X$ over $\R$.  The space of $j$-chains $C_j:=C_j(X;\R)$ is the vector space of linear combinations of oriented $j$-simplexes with coefficients in $\R$, with the stipulation that the two orientations of a simplex are negatives of each other in $C_j$ (as implied by our notation).  Thus, any choice of orientation $X^j_+$ provides a basis for $C_j$.  The space of $j$-cochains $C^j:=C^j(X;\R)$ is then defined to be the vector space dual to $C_j$.  These spaces are isomorphic and we will make no distinction between them.  Usually, we will work with cochains using the basis elements $\set{\textbf{1}_{\sigma_+}: \sigma_+ \in X^j_+}$, where $\textbf{1}_{\sigma_+} : C_j \to \R$ is defined on a basis element $\tau_+ \in X^j_+$ as 
\[ \textbf{1}_{\sigma_+}(\tau_+) = \begin{cases}
                                       1 & \tau_+ = \sigma_+ \\
                                       0 & \text{else}
                                      \end{cases}. \]

The boundary map $\partial_j:C_j \to C_{j-1}$ is the linear map defined on a basis element $[v_0, \ldots, v_j]$ as
\[ \partial_j[v_0, \ldots, v_j] = \sum_{i=0}^j (-1)^i[v_0, \ldots, v_{i-1}, v_{i+1}, \ldots, v_j] \]
The coboundary map $\delta^{j-1}:C^{j-1} \to C^j$ is then defined to be the transpose of the boundary map.  In particular, for $f \in C^{j-1}$,
\[ (\delta^{j-1}f)([v_0, \ldots, v_j]) = \sum_{i=1}^j (-1)^i f([v_0, \ldots, v_{i-1}, v_{i+1}, \ldots, v_j]). \]
When there is no confusion, we will denote the boundary and coboundary maps by $\partial$ and $\delta$.  It holds that $\partial \partial = \delta \delta = 0$, so that $(C_j, \partial_j)$ and $(C^j, \delta^j)$ form chain and cochain complexes.  


The homology and cohomology vector spaces of $X$ over $\R$ are 
\[ H_j := H_j(X;\R) = \frac{\ker \partial_j}{\im \partial_{j+1}} \text{\quad and \quad} H^j := H^j(X;\R) = \frac{\ker \delta^j}{\im \delta^{j-1}}. \]
It is known from the universal coefficient theorem that $H^j$ is the vector space dual to $H_j$.  Reduced (co)homology can also be used, and it is equivalent to including the nullset as a $(-1)$-dimensional simplex in $X$.

\subsection{The Laplacian}
The $k$-Laplacian of $X$ is defined to be 
\[ L_k := L_k^\text{up} + L_k^\text{down} \]
where 
\[L_k^\text{up} = \partial_{k+1}\delta^k \text{\quad and \quad} L_k^\text{down} = \delta^{k-1}\partial_k.\]
The Laplacian is a symmetric positive semi-definite matrix, as is each part $L_k^\text{up}$ and $L_k^\text{down}$.  From Hodge theory, it is known that 
\[ \ker L_k \cong H^k \cong H_k \]
and the space of cochains decomposes as
\[ C^k = \im \partial_{k+1} \oplus \ker L_k \oplus \im \delta^{k-1} \]
where the orthogonal direct sum $\oplus$ is under the ``usual'' inner product
\[ \langle f, g \rangle = \sum_{\sigma_+ \in X^k_+} f(\sigma_+)g(\sigma_+). \]
We are interested in the $L_j^{\text{down}}$ half of the Laplacian.  Trivially, $\im \partial_{j+1} \subseteq \ker L_j^\text{down}$.  The smallest nontrivial eigenvalue of $L_k^\text{down}$ is therefore given by
\[ \lambda_k  = \min_{\substack{f \in C^k \\ f \perp \im \partial}} \frac{\norm{\partial f}_2^2}{\norm{f}_2^2}, \]
where $\norm{f}_2 := \sqrt{\langle f, f \rangle}$ denotes the Euclidean norm on $C^k$.  A cochain $f$ that achieves the minimum is an eigenvector of $\lambda_k$.  It is easy to see that any such $f$ is also an eigenvector of $L_k$ with eigenvalue $\lambda_k$ and that, therefore, $\lambda_k$ relates to homology:
\[ \lambda_k = 0 \Leftrightarrow \ker L_k \neq 0 \Leftrightarrow H^k \neq 0. \] 

\begin{remark}\label{rem: Main}
Given a choice of orientation $X^k_+$, $L_k^\text{down}$ can be written as a matrix with rows and columns indexed by $X^k_+$, the entries of which are given by
\[ (L_k^\text{down})_{\sigma'_+, \sigma_+} = \begin{cases}
                                              k+1 & \sigma'_+ = \sigma_+ \\
                                              1 & \sigma'_- \sim \sigma_+ \\
                                              -1 & \sigma'_+ \sim \sigma_+ \\
                                              0 & \text{else}
                                             \end{cases}. \]
                                             
Changing the choice of orientation $X^k_+$ amounts to a change of basis for $L_k^\text{down}$.  If the row and column indexed by $\sigma_+$ are instead indexed by $\sigma_-$, all the entries in them switch sign except the diagonal entry.  Alternatively, $L_k^\text{down}$ can be characterized by how it acts on cochains:
\[ L_k^\text{down} f(\tau_+) = (k+1)\cdot f(\tau_+) + \sum_{\sigma_- \sim \tau_+} f(\sigma_+) - \sum_{\sigma_+ \sim \tau_+} f(\sigma_+). \]
Note that since $L_k^\text{down} f$ is a cochain, $L_k^\text{down} f(\tau_-) = -L_k^\text{down} f(\tau_+)$.
\end{remark}

The behavior of $L_k^\text{down}$ is related to the following concepts:
\begin{definition}
A $d$-complex $X$ is called $k$-connected ($1 \leq k \leq d$) if for every two $k$-simplexes $\sigma, \sigma'$ there exists a chain $\sigma = \sigma_0, \sigma_1, \ldots, \sigma_n=\sigma'$ of $k$-simplexes such that $\sigma_i$ is lower adjacent to $\sigma_{i+1}$ for all $i$.  For a general $d$-complex $X$, such chains define equivalence classes of $k$-simplexes, and the subcomplexes induced by these are called the $k$-connected components of $X$.
\end{definition}

\begin{definition}
A $d$-complex $X$ is called disorientable if there is a choice of orientation $X^d_+$ of its $d$-simplexes such that all lower adjacent $d$-simplexes are dissimilarly oriented.  In this case, the $d$-cochain $f = \sum_{\sigma_+ \in X^d_+} \textbf{1}_{\sigma_+}$ is called a disorientation.  
\end{definition}

\begin{remark}
Disorientability was defined in \cite{parzanchevski2012simplicial} and shown to be a higher-dimensional analogue of bipartiteness for graphs.  Note that one can also define $X$ to be $k$-disorientable if the $k$-skeleton of $X$ (the $k$-complex given by the union $\bigcup_{i \leq k}X^i$) is disorientable, but this can only happen when $k = d$.  This is not hard to see: if $k < d$ then there exists a $(k+1)$-simplex $\sigma_+ = [v_0, \ldots, v_k]$.  Given any two dissimilarly oriented faces of $\sigma_+$, say, $[v_1, v_2, \ldots, v_k]$ and $[v_0, v_2, \ldots, v_k]$, we find that the simplex $\set{v_0, v_1, v_3, \ldots, v_k}$ cannot be dissimilarly oriented to both of them simultaneously.
\end{remark}

\begin{lemma}\label{lem: spec}
 Let $X$ be a $d$-complex, $1 \leq k \leq d$ and $M = \max_{\sigma \in X^{k-1}} \deg(\sigma)$.  
\begin{enumerate}
 \item $\textup{Spec}(L_k^\text{down})$ is the disjoint union of $\textup{Spec}(L_k^\text{down}\vert_{X_i})$ where $X_i$ are the $k$-connected components of $X$.
 \item The spectrum of $L_k^\text{down}$ is contained in $[0,(k+1)M]$.
 \item The kernal of $L_k^\text{down}$ is exactly $\ker \partial_k = \im \partial_{k+1} \oplus \ker L_k$.
 \item The upper bound $(k+1)M$ is attained if and only if $k=d$ and $X$ has a $d$-connected component that is both disorientable and of constant $(d-1)$-degree.
\end{enumerate}
\end{lemma}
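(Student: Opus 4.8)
The plan is to prove the four parts essentially independently, with (1)--(3) being short and (4) carrying the real content.

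For part (1), I would use the matrix description of $L_k^\text{down}$ in Remark \ref{rem: Main}: the $(\sigma'_+,\sigma_+)$-entry vanishes unless $\sigma$ and $\sigma'$ are lower adjacent, and in particular it vanishes when they lie in different $k$-connected components. Ordering the basis $X^k_+$ so that simplices of a common component are consecutive makes $L_k^\text{down}$ block diagonal with blocks $L_k^\text{down}\vert_{X_i}$, one per component, so its spectrum (counted with multiplicity) is the disjoint union of the block spectra.

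For part (2), note that $\delta^{k-1}$ is the transpose of $\partial_k$ under the usual inner product, so $L_k^\text{down}=\delta^{k-1}\partial_k$ is positive semidefinite, which gives the lower bound $0$; and for every $f\in C^k$,
\[ \langle L_k^\text{down}f,f\rangle=\norm{\partial_kf}_2^2=\sum_{\tau_+\in X^{k-1}_+}\Bigl(\sum_{\sigma\supset\tau}[\sigma_+:\tau_+]\,f(\sigma_+)\Bigr)^2, \]
where $[\sigma_+:\tau_+]=\pm1$ records whether $\sigma_+$ induces $\tau_+$ or $\tau_-$ on the face $\tau$. I would apply Cauchy--Schwarz to each inner sum --- which has $\deg(\tau)\le M$ terms --- and then use that each $k$-simplex has exactly $k+1$ faces, so that $\sum_{\tau_+}\sum_{\sigma\supset\tau}f(\sigma_+)^2=(k+1)\norm{f}_2^2$; this yields $\langle L_k^\text{down}f,f\rangle\le(k+1)M\norm{f}_2^2$, hence $\textup{Spec}(L_k^\text{down})\subseteq[0,(k+1)M]$. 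For part (3), $\delta^{k-1}\partial_kf=0$ implies $\norm{\partial_kf}_2^2=\langle\delta^{k-1}\partial_kf,f\rangle=0$, so $\ker L_k^\text{down}=\ker\partial_k=(\im\delta^{k-1})^\perp$, and comparing with the Hodge decomposition $C^k=\im\partial_{k+1}\oplus\ker L_k\oplus\im\delta^{k-1}$ recalled earlier gives $\ker\partial_k=\im\partial_{k+1}\oplus\ker L_k$.

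Part (4) is the crux, and I would obtain it by tracking the equality case of part (2). For the \emph{if} direction, if $X_i$ is a $k$-connected component that is disorientable and of constant $(k-1)$-degree $M$, its disorientation $f=\sum_{\sigma_+\in(X_i)^k_+}\textbf{1}_{\sigma_+}$ satisfies, by the action formula in Remark \ref{rem: Main}, $L_k^\text{down}f(\tau_+)=(k+1)+(k+1)(M-1)=(k+1)M$ for each $\tau_+\in(X_i)^k_+$ (the $k+1$ faces of $\tau$ each carry $M-1$ further cofaces, all distinct and all dissimilarly oriented to $\tau_+$), so $(k+1)M$ is attained. For the \emph{converse}, take an eigenvector $f$ for $(k+1)M$; by part (1) I may assume $f$ is supported on a single $k$-connected component $X_i$. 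Equality in part (2) forces, for every $(k-1)$-simplex $\tau$ possessing a coface $\sigma$ with $f(\sigma_+)\neq0$: (a) $\deg(\tau)=M$; (b) the quantities $[\sigma_+:\tau_+]f(\sigma_+)$ agree over all cofaces $\sigma$ of $\tau$. Condition (b) makes the set of $k$-simplices on which $f\neq0$ closed under lower adjacency, hence --- being nonempty and inside the connected $X_i$ --- equal to $(X_i)^k$; thus every $(k-1)$-simplex of $X_i$ obeys (a), i.e.\ $X_i$ has constant $(k-1)$-degree $M$. With $f$ now nowhere zero, (b) further forces $|f(\sigma_+)|$ to be constant across lower adjacency, hence constant $=c>0$ on $(X_i)^k$ by $k$-connectedness; re-choosing the orientation $X^k_+$ so that $f\equiv c$ on $X_i$, condition (b) says that all cofaces of any fixed face $\tau$ induce the same orientation on $\tau$, i.e.\ any two of them are dissimilarly oriented. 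Since distinct lower-adjacent $k$-simplices share exactly one $(k-1)$-face, this means $X_i$ is disorientable with disorientation $c^{-1}f$. Finally, for the ``$k=d$'' clause I would invoke the observation in the remark on disorientability that a disorientable complex cannot contain the full $k$-skeleton of a $(k+1)$-simplex (two dissimilarly oriented $k$-faces of a $(k+1)$-simplex always leave a third $k$-face that cannot be dissimilarly oriented to both), so $X_i$ contains no $(k+1)$-simplex, which forces $k=d$.

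The main obstacle is the equality analysis in part (4): one must propagate (a) and (b) through the whole $k$-connected component rather than merely locally, extract the constant modulus of $f$, re-orient so that $f$ becomes the constant cochain, and then correctly translate ``all cofaces induce a common orientation on their shared face'' into the paper's ``dissimilarly oriented'' terminology --- including the point that two distinct lower-adjacent $k$-simplices meet in exactly one face. The final ``$k=d$'' clause is the other place that needs care, resting on the combinatorial fact that disorientability obstructs the $k$-skeleton of a $(k+1)$-simplex. Parts (1)--(3) are routine linear algebra and Hodge theory.
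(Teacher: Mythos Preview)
Your treatment of parts (1) and (3) matches the paper's. For part (2) you take a different route: the paper picks an eigenvector $f$, reorients so that $f\ge 0$, evaluates $L_k^{\text{down}}f$ at a simplex $\tau_+$ maximizing $f$, and bounds the expression in Remark~\ref{rem: Main} term by term; you instead bound the Rayleigh quotient $\langle L_k^{\text{down}}f,f\rangle=\norm{\partial_k f}_2^2$ via Cauchy--Schwarz on each $(k-1)$-face followed by the double count $\sum_\tau\sum_{\sigma\supset\tau}f(\sigma_+)^2=(k+1)\norm{f}_2^2$. Both are standard and correct, and each feeds naturally into part (4): the paper tracks equality in its pointwise chain of inequalities, you track equality in Cauchy--Schwarz and in $\deg(\tau)\le M$. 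The conditions you extract, the propagation through a $k$-connected component, and the ``if'' construction are essentially the same as the paper's.

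There is, however, a genuine gap in your last step for the ``$k=d$'' clause, and the paper makes the same leap. From disorientability of $X_i$ you correctly deduce that no $(k+1)$-simplex of $X$ has all its $k$-faces in $X_i$. But ``$X_i$ contains no $(k+1)$-simplex'' does \emph{not} force $k=d$: the remark you invoke concerns $k$-disorientability of the whole $k$-skeleton of $X$, not of a single $k$-connected component. Concretely, let $X$ be the disjoint union of a filled triangle $\{0,1,2\}$ and a $4$-cycle on $\{3,4,5,6\}$; then $d=2$, $k=1$, $M=2$, the $4$-cycle is a $1$-connected component that is disorientable with constant vertex degree $M$, and its disorientation is an eigenvector of $L_1^{\text{down}}$ with eigenvalue $(k+1)M=4$, yet $k\neq d$. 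What your equality analysis (and the paper's) actually proves is that the bound is attained iff some $k$-connected component is disorientable, has constant $(k-1)$-degree $M$, and is not touched by any $(k+1)$-simplex; the stronger conclusion $k=d$ needs an extra hypothesis such as ``every $k$-simplex has a coface'' or ``$X$ is $k$-connected''.
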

\begin{proof}
Statement (1) follows from the fact that $L_k^\text{down}$ can be written as a block diagonal matrix with each block corresponding to a component $X_i$.  Statement (3) is easy to verify.

For statement (2), let $f$ be an eigenvector of $L_k^\text{down}$ with eigenvalue $\lambda$, let $X^k_+$ be a choice of orientation such that $f(\sigma_+) \geq 0$ for all $\sigma_+ \in X^k_+$ and suppose $f(\tau_+) = \max_{\sigma_+ \in X^k_+} f(\sigma_+)$.  Then by Remark \ref{rem: Main},
\begin{align*}
 \lambda f(\tau_+) &= L_k^\text{down} f \\
                   &= (k+1)\cdot f(\tau_+) + \sum_{\sigma_- \sim \tau_+} f(\sigma_+) - \sum_{\sigma_+ \sim \tau_+} f(\sigma_+) \\
                        &\leq (k+1)\cdot f(\tau_+) + \sum_{\sigma_- \sim \tau_+} f(\sigma_+) + \sum_{\sigma_+ \sim \tau_+} f(\sigma_+) \\
                        &\leq (k+1)\cdot f(\tau_+) + \sum_{\sigma_- \sim \tau_+} f(\tau_+) + \sum_{\sigma_+ \sim \tau_+} f(\tau_+) \\
                        &\leq (k+1)\cdot f(\tau_+) + (k+1)(M-1)\cdot f(\sigma_+) \\
                        &\leq (k+1)M \cdot f(\tau_+)
\end{align*}
where the third inequality results from the fact that any $k$-simplex is lower adjacent to at most $(k+1)(M-1)$ other $k$-simplexes.  Therefore, $\lambda \leq (k+1)M$.  

It now remains to prove statement (4).  Looking back at the inequalities, it holds that $\lambda = (k+1)M$ only if $\sigma_- \sim \tau_+$ and $f(\sigma_+) = f(\tau_+)$ whenever $\sigma$ and $\tau$ are lower adjacent, and the faces of $\sigma$ all have degree $M$.  But since $f(\sigma_+) = f(\tau_+)$, the same reasoning can be applied to $f(\sigma_+)$ for all $\sigma$ lower adjacent to $\tau$ and eventually to all $k$-simplexes in the same $k$-connected component $X_i$.  Ultimately, this implies that $X_i$ has constant $(k-1)$-degree and is $k$-disorientable (and hence $k = d$).

To see that this bound is indeed attainable, consider a disorientable $d$-complex with constant $(d-1)$-degree $M$ (this includes, for instance, the simplicial complex induced by a single $d$-simplex).  Let $X_+^d$ be a choice of orientation such that all lower adjacent $d$-simplexes are dissimilarly oriented.  Then a disorientation $f$ on $X^d$ will satisfy
\begin{align*}
L_k^\text{down} f(\tau_+) &= (k+1)\cdot f(\tau_+) + \sum_{\sigma_- \sim \tau_+} f(\sigma_+) - \sum_{\sigma_+ \sim \tau_+} f(\sigma_+) \\
                        &=(k+1)\cdot f(\tau_+) + \sum_{\sigma_- \sim \tau_+} f(\sigma_+) \\
                        &= (k+1)\cdot 1 + \sum_{\sigma_- \sim \tau_+} 1 \\
                        &= (k+1)M \cdot 1 = (k+1)M \cdot f(\tau_+)
\end{align*}
for every $\tau_+$.
\end{proof}

\section{Random walks and the $k$-Laplacian}
\label{sec: Main}

In this section we define the $p$-lazy Dirichlet $k$-walk on $X$ and relate this walk to the spectrum of the $k$-Laplacian. \\

\paragraph{\textbf{Random walks and $L_k^\text{down}$}} 

Let $X$ be a $d$-complex, $1 \leq k \leq d$, $0 \leq p < 1$, and $M = \max_{\sigma \in X^{k-1}} \deg(\sigma)$.  
\begin{definition}
The $p$-lazy Dirichlet $k$-walk on $X$ is an absorbing Markov chain on the state space $S=X^k_\pm \cup \set{\Theta}$ defined as follows:
\begin{itemize}
\item Let two oriented $k$-cells $s, s' \in X^k_{\pm}$ be called textit{neighbors} (denoted $s \sim s'$) if they share a face and are similarly oriented.  In what follows, $\Theta$ will be used to represent an additional absorbing state, called the ``death state'', that the Markov chain can occupy.
\item Starting at an initial oriented $k$-simplex $\tau_+ \in X_{\pm}^k$, the walk proceeds as a time-homogenous Markov chain on the state space $S=X_{\pm}^k \cup \set{\Theta}$ with transition probabilities
\[ \text{Prob}(\sigma_+ \to \sigma'_+) = \text{Prob}(\sigma_- \to \sigma'_-) = \begin{cases}
p & \sigma'_+ = \sigma_+ \\
\frac{1-p}{(M-1)(k+1)} & \sigma'_+ \sim \sigma_+ \\
0 & \text{else},
\end{cases} \]
\[ \text{Prob}(\sigma_+ \to \sigma'_-) = \text{Prob}(\sigma_- \to \sigma'_+) = \begin{cases}
\frac{1-p}{(M-1)(k+1)} & \sigma'_- \sim \sigma_+ \\
0 & \text{else},
\end{cases} \]
\[ \text{Prob}(\sigma_+ \to \Theta) = \text{Prob}(\sigma_- \to \Theta) = 1-\sum_{\sigma'_+}\text{Prob}(\sigma_+ \to \sigma'_+), \]
\[ \text{Prob}(\Theta \to \Theta) = 1 \]
for all $\sigma, \sigma' \in X^k$.
\item This walk can be interpreted as follows.  Starting at $\tau_+$, the walk has probability $p$ of staying put and for each of the neighbors of $\tau_+$ the walk has probability $\frac{1-p}{(M-1)(k+1)}$ of jumping to that neighbor.  Note that if the number of neighbors of $\tau_+$ is less than $(M-1)(k+1)$, then the sum of these probabilities is less than 1.  In this case, we interpret the difference as the probability that the walker dies (i.e., the walker jumps to a death state from which it can never return). The same holds for  $\tau_-$.
\end{itemize}
\end{definition}

The left stochastic matrix for the Markov chain is a square matrix $P$ with rows and columns indexed by the state space $S=X^k_\pm \cup \set{\Theta}$ such that
\[ P_{s_1, s_2} = \text{Prob}(s_2 \to s_1) \]
for all $s_1, s_2 \in S$.  In stochastic processes it is more common to use the right stochastic matrix $P^T$ as the probability matrix, for us it will be more convenient to use the left stochastic matrix.  An initial distribution on the state space is a column vector $\nu$ indexed by $S$ such that all entries are non-negative and sum to 1.  The general framework in stochastic processes is to study how the marginal distribution $P^n \nu$ evolves as $n \to \infty$.  Indeed, one can view the Dirichlet $k$-walk as a Markov chain on a graph with vertex set $V = S$ and study the limiting behavior of $P^n  \nu$ within the context of graph theory.  However, this is not our goal.  Our goal is to connect the $k$-walk to the $k$-dimensional Laplacian, and hence to the $k$-dimensional topology and geometry of $X$.  

In order to connect the $k$-walk to $L_k$, we will not study the evolution of $P^n \nu$ but rather $T P^n \nu$, the image of the marginal distribution under a linear transformation $T$ defined as follows.  Given a choice of orientation $X^k_+ = \set{\sigma_+ : \sigma \in X^k}$, $T$ is defined to be the matrix with rows indexed by $X^k_+$ and columns indexed by $S$ such that 
\[ (T)_{\sigma_+, \sigma_+} = 1 \qquad \text{and} \qquad (T)_{\sigma_+, \sigma_-} = -1 \]
for all $\sigma \in X^k$, and such that all other entries are 0.  In other words, for any function $f : S \to \R$, $T f$ is the function $T f : X^k_+ \to \R$ such that
\[ T f (\sigma_+) = f(\sigma_+) - f(\sigma_-). \]
The definition of $T$ is motivated by geometry.  The geometry of simplicial complexes is characterized by the space of $k$-cochains $C^k$ in which $\sigma_+ = -\sigma_-$ (and for which $X^k_+$ is a choice of basis).  Probabilistically, $\sigma_+$ and $\sigma_-$ are completely separate states for the Markov chain, but geometrically we must think of them as opposite orientations of the same underlying object $\sigma$.  In addition, the state $\Theta$ has no corresponding object in $C^k$, so $T$ simply removes it from the system.  Of course, the vector $T P^n \nu$ does not have the property that it is always a distribution (all entries nonnegative and summing to 1), but it has the advantage that it resides in $C^k$ and can be related to $L_k$ as follows.  


\begin{definition}\label{def: Main}
The \textit{propagation matrix} $B$ of the Dirichlet $k$-walk is defined to be a square matrix indexed by $X^k_+$ with
\[ (B)_{\sigma'_+, \sigma_+} = \begin{cases} p & \sigma'_+ = \sigma_+ \\
-\frac{1-p}{(M-1)(k+1)} & \sigma'_+ \sim \sigma_+ \\
\frac{1-p}{(M-1)(k+1)} & \sigma'_- \sim \sigma_+ \\
0 & \text{else} \end{cases}. \]
\end{definition}    

\begin{proposition}\label{prop: Main} The propagation matrix $B$ is given by
\[ B = \frac{p(M-2)+1}{M-1}I - \frac{1-p}{(M-1)(k+1)} \cdot L_k^\text{down}. \]
In addition, $B$ satisfies $T P = B T$, so that
\[ T P^n \nu = B^n T \nu. \]
\end{proposition}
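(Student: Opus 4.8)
The plan is to prove the identity $B = \frac{p(M-2)+1}{M-1}I - \frac{1-p}{(M-1)(k+1)}L_k^\text{down}$ by comparing the two matrices entrywise on the basis indexed by $X^k_+$, and then to establish $TP = BT$ by a direct computation of how both sides act on a function $f \colon S \to \R$, evaluated at an arbitrary positively oriented simplex $\tau_+$. The final claim $TP^n\nu = B^n T\nu$ then follows immediately by induction on $n$ from $TP = BT$ (the base case $n=0$ being $T\nu = T\nu$, and the inductive step $TP^{n+1}\nu = (TP)P^n\nu = (BT)P^n\nu = B(TP^n\nu) = B(B^nT\nu) = B^{n+1}T\nu$), so no real work remains once the two structural facts are in hand.

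For the matrix identity, I would invoke Remark~\ref{rem: Main}, which gives the entries of $L_k^\text{down}$ explicitly: $(k+1)$ on the diagonal, $+1$ when $\sigma'_- \sim \sigma_+$, $-1$ when $\sigma'_+ \sim \sigma_+$, and $0$ otherwise. Substituting into the claimed right-hand side, the diagonal entry is $\frac{p(M-2)+1}{M-1} - \frac{1-p}{(M-1)(k+1)}(k+1) = \frac{p(M-2)+1 - (1-p)}{M-1} = \frac{p(M-1)}{M-1} = p$, matching Definition~\ref{def: Main}. The $\sigma'_+ \sim \sigma_+$ entry becomes $0 - \frac{1-p}{(M-1)(k+1)}(-1) = \frac{1-p}{(M-1)(k+1)}$ — wait, I must be careful here: Definition~\ref{def: Main} has $-\frac{1-p}{(M-1)(k+1)}$ for $\sigma'_+\sim\sigma_+$ and $+\frac{1-p}{(M-1)(k+1)}$ for $\sigma'_-\sim\sigma_+$, so I need the signs in Remark~\ref{rem: Main} to line up accordingly; since $L_k^\text{down}$ has $-1$ for $\sigma'_+\sim\sigma_+$, the term $-\frac{1-p}{(M-1)(k+1)}\cdot(-1) = +\frac{1-p}{(M-1)(k+1)}$, which does \emph{not} match $B$'s $-\frac{1-p}{(M-1)(k+1)}$. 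This sign bookkeeping is the one genuinely delicate point, and I expect it to be the main obstacle: one must track the orientation conventions consistently, and likely the resolution is that $B$ as written together with the stated formula are consistent once one notices $TP = BT$ forces a particular sign, or there is a transpose/orientation subtlety in how $B$ acts versus how its matrix entries are indexed. I would resolve it by carefully rederiving $B$ from the requirement $TP = BT$ rather than trusting the displayed entry signs, or by rechecking that the $\sim$ relation on $X^k_\pm$ in the walk's definition (``share a face and similarly oriented'') matches the $\sim$ in Remark~\ref{rem: Main}.

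For $TP = BT$, I would compute $(TP f)(\tau_+) = (Pf)(\tau_+) - (Pf)(\tau_-)$ for arbitrary $f \colon S \to \R$. Using the transition probabilities, $(Pf)(\tau_+) = \sum_{s} P_{\tau_+, s} f(s)$; since $P_{\tau_+,s} = \text{Prob}(s \to \tau_+)$, I expand over the states $s$ that can reach $\tau_+$: $\tau_+$ itself with probability $p$, each $\sigma_+$ with $\sigma_+ \sim \tau_+$ with probability $\frac{1-p}{(M-1)(k+1)}$, and each $\sigma_-$ with $\sigma_- \sim \tau_+$ with the same probability (the death state $\Theta$ contributes nothing to $\tau_+$). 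Subtracting the analogous expansion for $\tau_-$ and using $f(\sigma_+) - f(\sigma_-) = (Tf)(\sigma_+)$ together with the orientation-reversal symmetry of the transition rules (so that $\sigma_+ \sim \tau_+ \iff \sigma_- \sim \tau_-$), the $\Theta$-terms and the cross-terms collapse, and I should land exactly on $p\,(Tf)(\tau_+) \mp \frac{1-p}{(M-1)(k+1)}\sum (Tf)(\sigma_+)$ with the appropriate signs for the $\sigma_+\sim\tau_+$ versus $\sigma_-\sim\tau_+$ cases — i.e., $(BTf)(\tau_+)$. This is the computation where the sign conventions from the previous paragraph must be pinned down, so I would do the two paragraphs together as a single consistent accounting of orientations.
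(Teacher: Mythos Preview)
Your approach is correct and essentially the same as the paper's: the identity for $B$ is verified entrywise against Remark~\ref{rem: Main}, and $TP=BT$ is checked by computing each matrix column-by-column over $s\in S$ (the paper does this by evaluating $(TP)_{\sigma_+,s}$ and $(BT)_{\sigma_+,s}$ directly, which is equivalent to your action-on-functions computation). The sign discrepancy you flagged is real and stems from a typo in Definition~\ref{def: Main} (its off-diagonal signs are swapped); the formula in the Proposition is the correct description of $B$, and your instinct to let the intertwining relation $TP=BT$ dictate the signs is exactly how the paper's own proof implicitly proceeds.
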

\begin{proof}
The first claim is straightforwardly checked using Definition \ref{def: Main} and Remark \ref{rem: Main}.  The second claim is equivalent to the equality $T P = B T$, which we will prove as follows.  If $s \in S$ and $P_s$ is the column of $P$ indexed by $s$, then the column of $T P$ indexed by $s$ is $T P_s$.  Using the definition of $T$, the following holds
\begin{align*} (T P)_{\sigma_+,s} &= T P_s (\sigma_+) \\
                                    &= P_s (\sigma_+) - P_s (\sigma_-) \\
                                    &= (P)_{\sigma_+,s} - (P)_{\sigma_-,s} \\
                                    &= \begin{cases} \pm p & s = \sigma_{\pm} \\ \pm \frac{1-p}{(M-1)(k+1)} & s \neq \Theta \text{ and } s \sim \sigma_{\pm} \\ 0 & \text{else} \end{cases}. 
\end{align*}
Similarly, note that $(B T)_{\sigma_+,s} = B (T \textbf{1}_s) (\sigma_+)$ where $\textbf{1}_s$ is the vector assigning 1 to $s \in S$ and 0 to all other elements in $S$.  If $s = \Theta$, $T \textbf{1}_s$ is the zero vector.  Otherwise, if $s = \tau_{\pm}$ then $T \textbf{1}_s = \pm \textbf{1}_{\tau_+}$ and
\begin{align*} (B T)_{\sigma_+,s} &= \pm B \,\textbf{1}_{\tau_+} (\sigma_+) \\
                                    &= \pm(B)_{\sigma_+,\tau_+} \\
                                    &= \begin{cases} \pm p & \tau_+ = \sigma_+ \\ \pm \frac{1-p}{(M-1)(k+1)} & \tau_+ \sim \sigma_+ \\ \mp \frac{1-p}{(M-1)(k+1)} & \tau_- \sim \sigma_+ \\ 0 & \text{else} \end{cases} \\
                                    &= \begin{cases} \pm p & s = \sigma_{\pm} \\ \pm \frac{1-p}{(M-1)(k+1)} & s \sim \sigma_{\pm} \\ 0 & \text{else} \end{cases}. 
\end{align*}                                    
This concludes the proof.
\end{proof}


For what follows, we define $\mathcal{E}_n^{\tau_+} := B^n \textbf{1}_{\tau_+}$ to be the marginal difference of the $p$-lazy Dirichlet $k$-walk on $X$ starting at $\tau_+$.  Also, let $X^k_+$ be a choice of orientation and denote $M = \max_{\sigma \in X^{k-1}} \deg(\sigma)$.  

\begin{corollary}\label{cor: B} \text{}\\
\begin{enumerate}
 \item The spectrum of $B$ is contained in $\left[2p-1, \frac{p(M-2)+1}{M-1}\right]$, with the upper bound acheived by cochains in $\ker \partial_k$ and the lower bound acheived if and only if $k=d$ and there is a disorientable $d$-connected component of constant $(d-1)$-degree.
 \item If $\tau$ has a coface, then
\[ \norm{\mathcal{E}_n^{\tau_+}}_2 \geq \left(\frac{p(M-2)+1}{M-1}\right)^n\frac{1}{\sqrt{k+2}}. \] 
 \item If $p \neq 0,1$ then
\[ \norm{\mathcal{E}_n^{\tau_+}}_2 \leq \max\left\{\abs{2p-1}^n, \left(\frac{p(M-2)+1}{M-1}\right)^n\right\}. \]
\end{enumerate}
\end{corollary}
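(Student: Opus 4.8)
The plan is to reduce all three statements to the spectral decomposition of the \emph{symmetric} matrix $B$. By Proposition~\ref{prop: Main} we have $B = \frac{p(M-2)+1}{M-1}I - \frac{1-p}{(M-1)(k+1)}L_k^\text{down}$, a real affine function of the symmetric positive semi-definite matrix $L_k^\text{down}$; hence $B$ is symmetric, orthogonally diagonalizable, and shares an eigenbasis with $L_k^\text{down}$, with $L_k^\text{down}f = \mu f$ implying $Bf = \bigl(\frac{p(M-2)+1}{M-1} - \frac{1-p}{(M-1)(k+1)}\mu\bigr)f$. Since $0 \le p < 1$ makes $\frac{1-p}{(M-1)(k+1)} > 0$, this correspondence $\mu \mapsto \lambda$ is strictly decreasing. (The standing hypotheses on the walk force $M \ge 2$, so every denominator appearing here is positive.)

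For part (1) I would push the three conclusions of Lemma~\ref{lem: spec} through this affine map. Lemma~\ref{lem: spec}(2) gives $\textup{Spec}(L_k^\text{down}) \subseteq [0,(k+1)M]$; substituting the endpoints $\mu = 0$ and $\mu = (k+1)M$ into the decreasing correspondence and simplifying yields the claimed interval $\bigl[2p-1,\ \frac{p(M-2)+1}{M-1}\bigr]$. The top endpoint is attained exactly on the $\mu = 0$ eigenspace, which is $\ker L_k^\text{down} = \ker\partial_k$ by Lemma~\ref{lem: spec}(3); the bottom endpoint is attained iff $\mu = (k+1)M$ occurs, which by Lemma~\ref{lem: spec}(4) happens precisely when $k = d$ and $X$ has a $d$-connected component that is disorientable and of constant $(d-1)$-degree.

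For parts (2) and (3) I would fix an orthonormal eigenbasis $\{e_i\}$ of $B$ with eigenvalues $\lambda_i$, so that $\|B^n v\|_2^2 = \sum_i \lambda_i^{2n}\langle v,e_i\rangle^2$ for every $v$. Part (3) is then immediate from part~(1): $\|\textbf{1}_{\tau_+}\|_2 = 1$ and every $|\lambda_i| \le \max\{|2p-1|,\ \frac{p(M-2)+1}{M-1}\}$ (the second term being nonnegative), so $\|B^n\textbf{1}_{\tau_+}\|_2 \le \max\{|2p-1|^n,\ (\tfrac{p(M-2)+1}{M-1})^n\}$. For part (2), keeping only the terms with $\lambda_i = \frac{p(M-2)+1}{M-1}$ — whose span is exactly $\ker\partial_k$ — gives $\|B^n\textbf{1}_{\tau_+}\|_2 \ge (\tfrac{p(M-2)+1}{M-1})^n\,\|\prj_{\ker\partial_k}\textbf{1}_{\tau_+}\|_2$, and it remains to show $\|\prj_{\ker\partial_k}\textbf{1}_{\tau_+}\|_2 \ge \tfrac{1}{\sqrt{k+2}}$. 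This is where the hypothesis that $\tau$ has a coface $\eta \in X^{k+1}$ enters: pick an orientation $\eta_+$ and set $g = \partial_{k+1}\eta_+$. Then $g \in \im\partial_{k+1} \subseteq \ker\partial_k$, the $k+2$ faces of $\eta$ are distinct $k$-simplexes each occurring in $g$ with coefficient $\pm1$ so $\|g\|_2^2 = k+2$, and $\langle \textbf{1}_{\tau_+}, g\rangle = g(\tau_+) = \pm1$ since $\tau$ is a face of $\eta$. Hence $\|\prj_{\ker\partial_k}\textbf{1}_{\tau_+}\|_2 \ge \bigl|\bigl\langle \textbf{1}_{\tau_+},\, g/\|g\|_2\bigr\rangle\bigr| = 1/\sqrt{k+2}$, as needed.

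The endpoint simplifications in part~(1) and the identity $\|\textbf{1}_{\tau_+}\|_2 = 1$ are routine. The step that needs genuine care is the constant $1/\sqrt{k+2}$ in part~(2): it relies on knowing that the $\frac{p(M-2)+1}{M-1}$-eigenspace of $B$ is \emph{exactly} $\ker\partial_k$ (so the projected mass is not overcounted) and on choosing $g$ to be a single boundary $\partial_{k+1}\eta_+$, for which both $\|g\|_2$ and its pairing with $\textbf{1}_{\tau_+}$ are explicitly computable; a less careful choice of element of $\im\partial_{k+1}$ would not produce the clean constant.
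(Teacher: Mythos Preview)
Your proposal is correct and follows essentially the same approach as the paper: part~(1) via the affine spectral correspondence with Lemma~\ref{lem: spec}, part~(3) via the spectral norm bound, and part~(2) by projecting $\textbf{1}_{\tau_+}$ onto $\ker\partial_k$ and witnessing the projection with the boundary $\partial_{k+1}\eta_+$ of a single coface. Your formulation of part~(2) via $\|\prj_{\ker\partial_k}\textbf{1}_{\tau_+}\|_2$ and an orthonormal eigenbasis is in fact a bit cleaner than the paper's displayed chain (which writes the $2$-norm of an orthogonal sum additively rather than via Pythagoras), but the idea is identical.
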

\begin{proof}
Statement (1) is easy to verify with the help of Lemma \ref{lem: spec} and Proposition \ref{prop: Main}.  Statement (3) follows from the inequality $\norm{Af}_2 \leq \norm{A} \norm{f}_2$ where $A$ is a matrix, $f$ is a vector, and $\norm{A}$ is the spectral norm on $A$. 

It remains now to prove statement (2).  If $\tau$ has a coface $\sigma$, let $f = \partial_{k+1} \textbf{1}_{\sigma_+}$ (with $\sigma_+$ being any orientation of $\sigma$) so that $f \in \ker \partial_k$.  Let $f, f_1, \ldots, f_i$ be an orthogonal basis for $C^k$ such that $f_1, \ldots, f_i$ are eigenvectors of $B$ with eigenvalues $\gamma_1, \ldots, \gamma_i$, and assume $\textbf{1}_{\tau_+} = \alpha f + \alpha_1 f_1 + \ldots + \alpha_i, f_i$.  Then,
\begin{align}
\norm{\mathcal{E}_n^{\tau_+}}_2 &= \norm{B^n \textbf{1}_{\tau_+}}_2 \\
                                &= \norm{\alpha B^n f + \alpha_1 B^n f_1 + \ldots + \alpha_i B^n f_i}_2 \\
                                &= \abs{\alpha} \left(\frac{p(M-2)+1}{M-1}\right)^n \norm{f}_2 + \abs{\alpha_1} \gamma_1^n \norm{f_1}_2 + \ldots + \abs{\alpha_i} \gamma_i^n \norm{f_i}_2 \\
                                &\geq \abs{\alpha} \left(\frac{p(M-2)+1}{M-1}\right)^n \norm{f}_2 \\
                                &= \left(\frac{p(M-2)+1}{M-1}\right)^n\left\vert\left\langle \frac{f}{\norm{f}_2}, \textbf{1}_{\tau_+} \right\rangle\right\vert \\
                                &= \left(\frac{p(M-2)+1}{M-1}\right)^n\frac{\abs{f(\tau_+)}}{\norm{f}_2} \\
                                &= \left(\frac{p(M-2)+1}{M-1}\right)^n\frac{1}{\sqrt{k+2}}
\end{align}
\end{proof}

Note that if $p \neq 0,1$, then $\abs{2p-1}$ and $\frac{p(M-2)+1}{M-1}$ are both less than one.  Hence, the above corollary says that the limit of the marginal difference is trivial in general.  We can remove this trivial behavior by making one final alteration to our object of study: multiply the propagation matrix $B$ by $\frac{M-1}{p(M-2)+1}$ to obtain the normalized propagation matrix $\widetilde{B} := \frac{M-1}{p(M-2)+1}B$ and define $\widetilde{\mathcal{E}}_n^{\tau_+} := \widetilde{B}^n \textbf{1}_{\tau_+}$ to be the normalized marginal difference.  The next two theorems show that the homology of $X$ can be determined from the limiting behavior of the normalized marginal difference.  

\begin{theorem}\label{thm: Main} \text{}\\
The limit $\widetilde{\mathcal{E}}_\infty^{\tau_+} := \lim_{n\to \infty} \widetilde{\mathcal{E}}_n^{\tau_+}$ of the normalized marginal difference exists for all $\tau_+$ if and only if $\widetilde{B}$ has no eigenvalue $\lambda \leq -1$.  Furthermore, $\widetilde{\mathcal{E}}_\infty^{\tau_+} = \prj_{\ker \partial_k} \textbf{1}_{\tau_+}$ whenever $\widetilde{\mathcal{E}}_\infty^{\tau_+}$ exists, where $\prj_{\ker \partial_k}$ is the projection map onto $\ker \partial_k$. 
\end{theorem}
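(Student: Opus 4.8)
The plan is to reduce the statement to a spectral-decomposition argument for the symmetric matrix $\widetilde{B}$, exploiting that $\widetilde{B}$ is conjugate (in fact equal, in an orthonormal basis) to a linear combination of $I$ and $L_k^\text{down}$ via Proposition \ref{prop: Main}, and that $\widetilde B$ is therefore symmetric and diagonalizable with an orthonormal eigenbasis of $C^k$. First I would record the normalization: from Proposition \ref{prop: Main},
\[ \widetilde B = I - \frac{1-p}{(p(M-2)+1)(k+1)} L_k^\text{down}, \]
so $\widetilde B$ and $L_k^\text{down}$ share eigenvectors; an eigenvector with $L_k^\text{down}$-eigenvalue $\mu$ has $\widetilde B$-eigenvalue $1 - c\mu$ with $c = \frac{1-p}{(p(M-2)+1)(k+1)} > 0$. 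In particular, by Lemma \ref{lem: spec}(3), $\ker \partial_k$ is exactly the $\mu = 0$ eigenspace, i.e. the eigenvalue-$1$ eigenspace of $\widetilde B$, and every other eigenvalue of $\widetilde B$ is $< 1$.

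Next I would fix an orthonormal eigenbasis $g_0, g_1, \ldots$ of $C^k$ for $\widetilde B$ with eigenvalues $\gamma_0 = \gamma_1 = \cdots = 1$ on a basis of $\ker\partial_k$ and $\gamma_j < 1$ otherwise, and expand $\textbf{1}_{\tau_+} = \sum_j \beta_j g_j$. Then $\widetilde{\mathcal E}^{\tau_+}_n = \widetilde B^n \textbf{1}_{\tau_+} = \sum_j \beta_j \gamma_j^n g_j$. Splitting this sum into the $\gamma_j = 1$ terms (which contribute a fixed vector, namely $\prj_{\ker\partial_k}\textbf{1}_{\tau_+}$, independent of $n$) and the $\gamma_j \ne 1$ terms, convergence of the whole sequence as $n\to\infty$ is equivalent to convergence of $\gamma_j^n$ for every $j$ with $\beta_j \neq 0$ and $\gamma_j \ne 1$. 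Since each such $\gamma_j$ satisfies $\gamma_j < 1$, the sequence $\gamma_j^n$ converges (to $0$) precisely when $\gamma_j > -1$, and fails to converge when $\gamma_j \le -1$ (it is $\pm 1$ alternating, or blows up, when $\gamma_j < -1$; it is constantly alternating $\pm 1$ when $\gamma_j = -1$). This gives the ``only if'' and ``if'' directions once one checks the coefficient issue: one must handle the possibility that an eigenvalue $\le -1$ exists but happens to have $\beta_j = 0$ for the particular $\tau_+$. The cleanest fix is to note the statement quantifies over \emph{all} $\tau_+$: the vectors $\textbf{1}_{\tau_+}$ span $C^k$, so if some eigenvalue $\gamma \le -1$ occurs, then $g$ (a corresponding eigenvector) has $\langle g, \textbf{1}_{\tau_+}\rangle \ne 0$ for at least one $\tau_+$, and for that $\tau_+$ the limit fails to exist. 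Conversely if no eigenvalue is $\le -1$, every $\gamma_j^n$ with $\gamma_j\ne 1$ tends to $0$, so every $\widetilde{\mathcal E}^{\tau_+}_n$ converges, and the limit is $\sum_{\gamma_j = 1}\beta_j g_j = \prj_{\ker\partial_k}\textbf{1}_{\tau_+}$.

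Finally I would assemble these pieces: state the eigen-expansion, prove the two implications as above, and then observe that in the convergent case the limit is exactly the orthogonal projection onto the $\gamma_j = 1$ eigenspace, which by Lemma \ref{lem: spec}(3) is $\ker\partial_k$. I expect the only real subtlety — the ``main obstacle'' — to be the bookkeeping with vanishing coefficients, i.e. making the ``for all $\tau_+$'' quantifier do the work of detecting a bad eigenvalue that might be orthogonal to some individual $\textbf{1}_{\tau_+}$; everything else is a routine consequence of symmetry of $\widetilde B$, Proposition \ref{prop: Main}, and Lemma \ref{lem: spec}(3). A secondary point worth stating explicitly is that $\widetilde B$ is genuinely symmetric (hence orthogonally diagonalizable with real spectrum), which follows since $L_k^\text{down}$ is symmetric and $\widetilde B$ is an affine function of it; this justifies using an orthonormal eigenbasis and identifying the stable-eigenspace projection with the orthogonal projection $\prj_{\ker\partial_k}$.
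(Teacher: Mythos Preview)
Your proposal is correct and follows essentially the same approach as the paper's own proof: diagonalize $\widetilde{B}$ in an orthogonal eigenbasis, identify the eigenvalue-$1$ eigenspace with $\ker\partial_k$, analyze convergence of $\gamma_j^n$ termwise, and use the fact that the $\textbf{1}_{\tau_+}$ span $C^k$ to handle the ``only if'' direction. The only cosmetic difference is that the paper cites Corollary~\ref{cor: B} for the spectral facts about $\widetilde{B}$, whereas you derive them directly from Proposition~\ref{prop: Main} and Lemma~\ref{lem: spec}(3); your explicit remark that $\widetilde{B}$ is symmetric (hence orthogonally diagonalizable) is a nice clarification the paper leaves implicit.
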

\begin{proof}
Note that by Corollary \ref{cor: B}, the spectrum of $\widetilde{B}$ is upper bounded by 1 and the eigenspace of the eigenvalue 1 is exactly $\ker \partial_k$.  Let $f_1, \ldots, f_i$ be an orthogonal basis for $C^k$ such that $f_1, \ldots, f_i$ are eigenvectors of $\widetilde{B}$ with eigenvalues $\gamma_1, \ldots, \gamma_i$.  Then any $\textbf{1}_{\tau_+}$ can be written as a linear combination $\textbf{1}_{\tau_+} = \alpha_1 f_1 + \ldots + \alpha_i, f_i$ so that
\[ \widetilde{\mathcal{E}}_\infty^{\tau_+} = \widetilde{B}^n \textbf{1}_{\tau_+} = \alpha_1 \gamma_1^n f_1 + \ldots, \alpha_i \gamma_i^n f_i \]
Since the $f_j$ form a basis, $\widetilde{\mathcal{E}}_\infty^{\tau_+}$ converges if and only if $\alpha_j \gamma_j^n$ converges for each $j$.  In other words, $\widetilde{\mathcal{E}}_\infty^{\tau_+}$ converges if and only if for every $j$, $\alpha_j = 0$ or $\gamma_j > -1$.  Furthermore, the limit (when it exists) is always
\[ \sum_{\set{j : \gamma_j = 1}} \alpha_j f_j = \prj_{\ker \partial_k} \textbf{1}_{\tau_+} \]

Finally, suppose $\widetilde{B}$ has an eigenvalue $\lambda \leq -1$.  Then there is an eigenvector $f$ such that $\widetilde{B}^n f = \lambda^n f$ does not converge.  Since the set of cochains $\set{\textbf{1}_{\tau_+} : \tau_+\in X^k_\pm}$ spans $C^k(\R)$, $f$ can be written as a linear combination of them and therefore $\widetilde{B}^n \textbf{1}_{\tau_+}$ must not converge for some $\tau_+$.
\end{proof}

\begin{theorem}\label{thm: Second} \text{}\\
\begin{enumerate}
 \item If $\frac{M-2}{3M-4}<p<1$ then the limit $\widetilde{\mathcal{E}}_\infty^{\tau_+}$ exists for all $\tau_+$ and 
\[ \dim(\textup{span}\set{\prj_{\ker \delta^k} \widetilde{\mathcal{E}}_\infty^{\tau_+} : \tau_+ \in X_{\pm}^k}) = \dim(H_k(X)) \]
where $\prj_{\ker \delta^k}$ denotes the projection map onto $\ker \delta^k$. 
\item The same holds when $p = \frac{M-2}{3M-4}$ and either $k < d$ or there are no disorientatable $d$-connected components of constant $(d-1)$-degree.   
\item We can say more if $p \geq \frac{1}{2}$.  In this case,
\[ \norm{\widetilde{\mathcal{E}}_n^{\tau_+} - \widetilde{\mathcal{E}}_\infty^{\tau_+}}_2 = O\left(\left[1-\frac{1-p}{(p(M-2)+1)(k+1)}\lambda_k\right]^n\right) \]
\end{enumerate}
\end{theorem}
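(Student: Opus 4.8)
The plan is to get parts~(1) and~(2) almost directly from Theorem~\ref{thm: Main}, Corollary~\ref{cor: B}, and the Hodge decomposition, and to get part~(3) as a spectral-gap estimate for the self-adjoint operator $L_k^\text{down}$ using $\widetilde B = I - c\,L_k^\text{down}$, where $c := \frac{1-p}{(p(M-2)+1)(k+1)} > 0$ by Proposition~\ref{prop: Main}. For existence of the limit, Theorem~\ref{thm: Main} reduces the question to ruling out eigenvalues $\le -1$ of $\widetilde B$. Rescaling the containment in Corollary~\ref{cor: B}(1) by $\frac{M-1}{p(M-2)+1}$ gives $\mathrm{Spec}(\widetilde B)\subseteq\bigl[\tfrac{(2p-1)(M-1)}{p(M-2)+1},\,1\bigr]$, so the first computation I would carry out is to solve $\tfrac{(2p-1)(M-1)}{p(M-2)+1} > -1$; clearing the positive denominator and simplifying collapses this to $p(3M-4) > M-2$, i.e.\ $p > \tfrac{M-2}{3M-4}$, which is exactly the hypothesis of~(1). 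At the borderline $p=\tfrac{M-2}{3M-4}$ the lower endpoint equals $-1$, but Corollary~\ref{cor: B}(1) also records that this value is \emph{attained} as an eigenvalue only when $k=d$ and there is a disorientable $d$-connected component of constant $(d-1)$-degree; excluding that case keeps the spectrum strictly above $-1$, which gives~(2).

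Once existence is secured, Theorem~\ref{thm: Main} identifies $\widetilde{\mathcal{E}}_\infty^{\tau_+} = \prj_{\ker\partial_k}\textbf{1}_{\tau_+}$, and the dimension statement becomes pure linear algebra. I would use the Hodge facts $\ker\partial_k = \im\partial_{k+1}\oplus\ker L_k$ and $\ker\delta^k = (\im\partial_{k+1})^\perp = \ker L_k\oplus\im\delta^{k-1}$; thus $\im\partial_{k+1}\perp\ker\delta^k$ while $\ker L_k\subseteq\ker\delta^k$, so $\prj_{\ker\delta^k}\circ\prj_{\ker\partial_k} = \prj_{\ker L_k}$ and hence $\prj_{\ker\delta^k}\widetilde{\mathcal{E}}_\infty^{\tau_+} = \prj_{\ker L_k}\textbf{1}_{\tau_+}$. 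Since $\{\textbf{1}_{\tau_+}:\tau_+\in X^k_+\}$ is a basis of $C^k$, these images span $\ker L_k$, and $\dim\ker L_k = \dim H^k = \dim H_k$ by Hodge theory; this finishes the span equality in~(1) and~(2).

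For~(3) I would split $\textbf{1}_{\tau_+} = g + h$ with $g = \prj_{\ker\partial_k}\textbf{1}_{\tau_+} = \widetilde{\mathcal{E}}_\infty^{\tau_+}$ and $h = \textbf{1}_{\tau_+} - g \in (\ker\partial_k)^\perp$. Because $L_k^\text{down}$ is self-adjoint with kernel $\ker\partial_k$ (Lemma~\ref{lem: spec}(3)), the subspace $(\ker\partial_k)^\perp$ is $\widetilde B$-invariant and $\widetilde B g = g$, so $\widetilde{\mathcal{E}}_n^{\tau_+} - \widetilde{\mathcal{E}}_\infty^{\tau_+} = \widetilde B^n h$ and $\norm{\widetilde{\mathcal{E}}_n^{\tau_+} - \widetilde{\mathcal{E}}_\infty^{\tau_+}}_2 \le \norm{\widetilde B\vert_{(\ker\partial_k)^\perp}}^n \norm{h}_2 \le \norm{\widetilde B\vert_{(\ker\partial_k)^\perp}}^n$. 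On $(\ker\partial_k)^\perp$ the eigenvalues of $\widetilde B$ are $1 - c\mu$ with $\mu$ ranging over the nonzero eigenvalues of $L_k^\text{down}$, which lie in $[\lambda_k,(k+1)M]$ (upper bound by Lemma~\ref{lem: spec}(2), lower bound by definition of $\lambda_k$, which is at most the least nonzero eigenvalue, with equality when $H_k=0$); hence those $\widetilde B$-eigenvalues lie in $\bigl[1 - c(k+1)M,\,1-c\lambda_k\bigr]$. A one-line computation gives $1 - c(k+1)M = \tfrac{(2p-1)(M-1)}{p(M-2)+1}$, which is $\ge 0$ exactly when $p\ge\tfrac12$; in that regime the spectral norm on $(\ker\partial_k)^\perp$ is the top eigenvalue $1-c\lambda_k$, yielding the asserted $O\bigl((1-c\lambda_k)^n\bigr)$.

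The only real obstacle is the single place the hypothesis $p\ge\tfrac12$ is used: the most negative eigenvalue of $\widetilde B$ on $(\ker\partial_k)^\perp$ comes from the \emph{largest} eigenvalue $(k+1)M$ of $L_k^\text{down}$, and one must check that this fast-but-sign-flipping mode does not, in absolute value, dominate the slow mode governed by $\lambda_k$; equivalently $\bigl|\tfrac{(2p-1)(M-1)}{p(M-2)+1}\bigr|\le 1-c\lambda_k$, which once $p\ge\tfrac12$ makes the left side nonnegative reduces to the trivial $\lambda_k\le (k+1)M$. I would also take care with the degenerate case $H_k\neq 0$ (then $\lambda_k = 0$, so the bound in~(3) is vacuous but still correct) and, in~(2), invoke the attainability clause of Corollary~\ref{cor: B}(1) rather than merely its spectral containment.
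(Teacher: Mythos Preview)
Your proposal is correct and follows essentially the same approach as the paper: reduce existence to the spectral containment $\mathrm{Spec}(\widetilde B)\subset(-1,1]$ via Theorem~\ref{thm: Main} and Corollary~\ref{cor: B}, solve the same inequality for the threshold on $p$, use that the $\textbf{1}_{\tau_+}$ span $C^k$ so their projections span $\ker L_k$, and for part~(3) identify the largest sub-unit eigenvalue of $\widetilde B$ as $1-c\lambda_k$ once $p\ge\tfrac12$ forces the spectrum into $[0,1]$. Your Hodge-decomposition justification that $\prj_{\ker\delta^k}\circ\prj_{\ker\partial_k}=\prj_{\ker L_k}$ and your handling of the $H_k\neq 0$ edge case are a bit more explicit than the paper's one-line treatment, but the argument is the same.
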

\begin{proof}
The proof follows mostly from Theorem \ref{thm: Main}.  According to that theorem, $\widetilde{\mathcal{E}}_\infty^{\tau_+}$ exists for all $\tau_+$ if and only if the spectrum of $\widetilde{B}$ is contained in $(-1,1]$.  Using Corollary \ref{cor: B} and the definition $\widetilde{B} := \frac{M-1}{p(M-2)+1}B$, we know that the spectrum of $\widetilde{B}$ is contained in $\left[(2p-1)\frac{M-1}{p(M-2)+1},1\right]$.  Now,
\begin{eqnarray*}
&(2p-1)\frac{M-1}{p(M-2)+1} > -1& \\
                           &\Updownarrow &\\
&\frac{p(M-2)+1}{M-1} > 1-2p& \\
     &\Updownarrow& \\
&p\left( \frac{M-2}{M-1} + 2 \right) > 1-\frac{1}{M}& \\
              &\Updownarrow &\\
&p > \frac{M-2}{3M-4},&
\end{eqnarray*}
which proves that the spectrum of $\widetilde{B}$ is indeed contained in $(-1,1]$ when $p > \frac{M-2}{3M-4}$.  Since the $\textbf{1}_{\tau_+}$ span all of $C^k$, the $\widetilde{\mathcal{E}}_\infty^{\tau_+} = \prj_{\ker \partial_k} \textbf{1}_{\tau_+}$ span all of $\ker \partial_k$, and hence the $\prj_{\ker \delta^k} \widetilde{\mathcal{E}}_\infty^{\tau_+}$ span all of $\ker L_k$.

In the case that $p = \frac{M-2}{3M-4}$, the spectrum of $\widetilde{B}$ is contained in $[-1,1]$.  However, as long as $-1$ is not actually an eigenvalue of $\widetilde{B}$, the result still holds.  According to Corollary \ref{cor: B}, $-1$ is an eigenvalue  if and only if $k=d$ and there is a disorientable $d$-connected component of constant $(d-1)$-degree.  The case $p = 1$ is trivial ($\widetilde{B} = I$) and not considered.  

Finally, if the spectrum of $B$ lies in $(-1,1]$ and $\lambda$ is the eigenvalue of $\widetilde{B}$ contained in $(-1,1)$ with largest absolute value, so
\[\norm{\widetilde{B}^n f - \lim_{n \to \infty}\widetilde{B}^n f}_2 \leq \abs{\lambda}^n \norm{f}_2 \]
for all $f$.  Let $f_1, \ldots, f_i$ be an orthonormal basis for $C^k$ such that $f_1, \ldots, f_i$ are eigenvectors of $\widetilde{B}$ with eigenvalues $\gamma_1, \ldots, \gamma_i$.  Then any $f$ can be written as a linear combination $f = \alpha_1 f_1 + \ldots, + \alpha_i f_i$ and so that $\norm{f}_2 = \sum_j \abs{\alpha_j}$ and
\begin{eqnarray*}
\norm{\widetilde{B}^n f - \lim_{n \to \infty}\widetilde{B}^n f}_2 &=& \norm{\alpha_1 \gamma_1^n f_1 + \ldots + \alpha_i \gamma_i^n f_i - \sum_{\set{j : \gamma_j = 1}} \alpha_j f_j}_2 \\
  &=& \norm{\sum_{\set{j : \gamma_j \neq 1}} \alpha_j \gamma_j^n f_j}_2 \\
  &= &\sum_{\set{j : \gamma_j \neq 1}} \abs{\alpha_j \gamma_j^n} \norm{f_j}_2 \\
  &\leq& \sum_{\set{j : \gamma_j \neq 1}} \abs{\alpha_j} \abs{\lambda}^n \\
  &\leq &\abs{\lambda}^n \norm{f}_2
\end{eqnarray*}
In particular, if $p \geq \frac{1}{2}$ then the spectrum of $\widetilde{B}$ is contained in $[0,1]$ and therefore $\lambda = 1-\frac{1-p}{(p(M-2)+1)(k+1)}\lambda_k$.
\end{proof}

Note the dependence of the theorem on both the lazy probability $p$ and on $M$.  We can think of $M$ as the maximum amount of ``branching'', where $M=2$ means there is no branching, as in a pseudomanifold of dimension $d=k$, and large values of $M$ imply a high amount of branching.  In particular, the walk must become more and more lazy for larger values of $M$ in order to prevent the marginal difference from diverging.  However, since $\frac{M-2}{3M-4} < \frac{1}{3}$ for all $M$ a lazy probability of at least $\frac{1}{3}$ will always ensure convergence.  While there is no explicit dependence on $k$ or the dimension $d$, it is easy to see that $M$ must always be at least $d - k + 1$ (for instance, it is not possible for a triangle complex to have maximum vertex degree 1).

We would also like to know whether for the normalized marginal difference converges to 0.  Note that if $\tau_+$ has a coface, then we already know that $\norm{\mathcal{E}_n^{\tau_+}}_2$ stays bounded away from 0 according to Corollary \ref{cor: B}.  However, if $\tau$ has no coface, then $\textbf{1}_{\tau_+}$ may be perpendicular to $\ker \partial_k$, allowing $\norm{\mathcal{E}_n^{\tau_+}}_2$ to die in the limit as we see in the following corollary.

\begin{corollary}
\label{cor: Main}
If $\tau$ has no coface, $H_k = 0$, and if $\frac{M-2}{3M-4}<p<1$ then
\[ \norm{\mathcal{E}_\infty^{\tau_+}}_2 = 0. \]
The same is true when $p = \frac{M-2}{3M-4}$ and either $k < d$ or there are no disorientable $d$-connected components of constant $(d-1)$-degree,
\end{corollary}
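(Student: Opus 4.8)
The statement concerns the \emph{normalized} marginal difference $\widetilde{\mathcal{E}}_\infty^{\tau_+}$ (as in Corollary \ref{cor: B} and the discussion just above the corollary), and the plan is to compute this limit and observe that it is the zero cochain. The first step is to secure existence of the limit: for $\frac{M-2}{3M-4}<p<1$ this is Theorem \ref{thm: Second}(1), and for $p=\frac{M-2}{3M-4}$ under the stated exclusion it is Theorem \ref{thm: Second}(2). In either case $\widetilde{B}$ has no eigenvalue $\leq -1$, so Theorem \ref{thm: Main} applies and yields $\widetilde{\mathcal{E}}_\infty^{\tau_+}=\prj_{\ker\partial_k}\textbf{1}_{\tau_+}$. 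It therefore suffices to show $\textbf{1}_{\tau_+}\perp\ker\partial_k$.

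The second step uses the remaining hypotheses. From $H_k=0$ and $\ker L_k\cong H_k$ we get $\ker L_k=0$, so Lemma \ref{lem: spec}(3) collapses $\ker L_k^\text{down}=\ker\partial_k$ to $\im\partial_{k+1}$. Hence we must show $\textbf{1}_{\tau_+}\perp\im\partial_{k+1}$. Now $\im\partial_{k+1}$ is spanned by the boundaries $\partial_{k+1}\textbf{1}_{\rho_+}$ of the $(k+1)$-simplexes $\rho$ of $X$, and $\langle\partial_{k+1}\textbf{1}_{\rho_+},\textbf{1}_{\tau_+}\rangle$ equals $\pm1$ if $\tau$ is a face of $\rho$ and $0$ otherwise; equivalently, since $\delta^k$ is the transpose of $\partial_{k+1}$, one has $\langle\partial_{k+1}g,\textbf{1}_{\tau_+}\rangle=\langle g,\delta^k\textbf{1}_{\tau_+}\rangle$ for every $g\in C^{k+1}$. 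Because $\tau$ has no coface, $\tau$ is a face of no $(k+1)$-simplex of $X$, so $\delta^k\textbf{1}_{\tau_+}=0$ and each of these inner products vanishes. Thus $\textbf{1}_{\tau_+}\perp\im\partial_{k+1}=\ker\partial_k$, whence $\widetilde{\mathcal{E}}_\infty^{\tau_+}=\prj_{\ker\partial_k}\textbf{1}_{\tau_+}=0$ and $\norm{\widetilde{\mathcal{E}}_\infty^{\tau_+}}_2=0$.

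I do not expect a genuine obstacle here; the argument is just the assembly of Theorems \ref{thm: Main} and \ref{thm: Second} with Lemma \ref{lem: spec}(3), together with the elementary fact that a cochain supported on a coface-free $k$-simplex annihilates every $k$-boundary. The only point needing care is the borderline value $p=\frac{M-2}{3M-4}$: there one must be sure to exclude the eigenvalue $-1$ of $\widetilde{B}$ before invoking Theorem \ref{thm: Main}, which is exactly why the corollary carries the extra hypothesis ruling out disorientable $d$-connected components of constant $(d-1)$-degree in that case.
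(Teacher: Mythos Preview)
Your proposal is correct and follows essentially the same route as the paper: establish convergence via Theorem~\ref{thm: Second}, identify the limit as $\prj_{\ker\partial_k}\textbf{1}_{\tau_+}$ via Theorem~\ref{thm: Main}, use $H_k=0$ to reduce $\ker\partial_k$ to $\im\partial_{k+1}$, and then observe that $\textbf{1}_{\tau_+}\perp\im\partial_{k+1}$ because $\tau$ has no coface. The only cosmetic difference is that the paper phrases the last step in terms of supports (elements of $\im\partial_{k+1}$ are supported on faces of $(k{+}1)$-simplexes), whereas you use the adjointness $\langle\partial_{k+1}g,\textbf{1}_{\tau_+}\rangle=\langle g,\delta^k\textbf{1}_{\tau_+}\rangle$ together with $\delta^k\textbf{1}_{\tau_+}=0$; these are the same observation.
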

\begin{proof}
Under all conditions stated, $\widetilde{\mathcal{E}}_\infty^{\tau_+}$ converges.  If $\tau$ has no coface, then $\textbf{1}_{\tau_+}$ is in the orthogonal complement of $\im \partial_{k+1}$, because all elements of $\im \partial_{k+1}$ are supported on oriented faces of $(k+1)$-simplexes.  If $H_k = 0$ then $\ker \partial_k = \im \partial_{k+1}$, so that 
\[ \norm{\widetilde{\mathcal{E}}_\infty^{\tau_+}}_2 = \prj_{\ker \partial_k} \textbf{1}_{\tau_+} = 0. \]
\end{proof}

\section{Random walks with Neumann boundary conditions}
The Neumann random walk described by Rosenthal and Parzanchevski in \cite{parzanchevski2012simplicial} is the ``dual'' of the Dirichlet random walk, jumping from simplex to simplex through cofaces rather than faces.  Let $X$ be a $d$-complex, $0 \leq k \leq d-1$, and $0 \leq p < 1$.  
\begin{definition} The $p$-lazy Neumann $k$-walk on $X$ is an absorbing markov chain on the state space $S = X^k_{\pm} \cup \set{\Theta}$ defined as follows:\\
\begin{itemize}
\item Let two oriented $k$-simplexes $s, s' \in X_{\pm}^k$ be called \textit{coneighbors} (denoted $s \frown s'$) if they share a coface and are dissimilarly oriented.  Also, let $\deg(\sigma)$ denote the number of cofaces of $\sigma$.  In what follows, $\Theta$ is an additional absorbing state the random walk can occupy, called the ``death state''.
\item Starting at an initial oriented $k$-simplex $\tau_+ \in X_{\pm}^k$ the walk proceeds with as a time-homogeneous Markov chain on 
$S := X^k_{\pm} \cup \set{\Theta}$ with transition probabilities 
\[ \text{Prob}(\sigma_+ \to \sigma'_+) = \text{Prob}(\sigma_- \to \sigma'_-) = \begin{cases}
p & \sigma'_+ = \sigma_+ \\
\frac{1-p}{k \cdot \deg(\sigma)} & \sigma'_+ \frown \sigma_+ \\
0 & \text{else},
\end{cases} \]
\[ \text{Prob}(\sigma_+ \to \sigma'_-) = \text{Prob}(\sigma_- \to \sigma'_+) = \begin{cases}
\frac{1-p}{k \cdot \deg(\sigma)} & \sigma'_- \frown \sigma_+ \\
0 & \text{else},
\end{cases} \]
\[ \text{Prob}(\sigma_+ \to \Theta) = \text{Prob}(\sigma_- \to \Theta) = \begin{cases} 1-p & \deg(\sigma) = 0 \\ 0 & \text{else} \end{cases}, \]
\[ \text{Prob}(\Theta \to \Theta) = 1. \]
for all $\sigma, \sigma' \in X^k$.

\item This walk can be described as follows.  Starting at at any $\sigma_+$, the walk has a probability $p$ of staying put and otherwise is equally likely to jump to one of the $k \cdot \deg(\sigma)$ coneighbors of $\sigma_+$.  If $\sigma$ has no coneighbors (i.e., if $\sigma$ has no cofaces), then the walk instead has probability $p$ of staying put and probability $1-p$ of jumping to the absorbing state $\Theta$. The same holds for starting at $\sigma_-$.
\end{itemize}
\end{definition}

This definition varies from that in \cite{parzanchevski2012simplicial} where the case of $k = d-1$ was examined and it was assumed that every $k$-simplex had at least one coface, and as a result a death state was not required.  The inclusion of the death state in all cases in the definition above allows us to use the matrix $T$ from Section \ref{sec: Main} to relate the marginal distribution of the walk to $L_k^\text{up}$.  If $\nu$ is an initial distribution and $P$ is the left stochastic matrix for the walk (so that $P^n \nu$ is the marginal distribution after $n$ steps), then $T P^n \nu$ is the marginal difference after $n$ steps for the Neumann $k$-walk.  Similar to the Dirichlet walk, there is a propagation matrix $A$ such that $T P^n \nu = A^n T \nu$ and such that $A$ relates to $L_k^\text{up}$.  Once again the marginal difference converges to 0 for all initial distributions, but this behavior is fixed by multiplying $A$ by a constant, obtaining a normalized propagation matrix $\widetilde{A}$ and a normalized marginal distribution $\widetilde{A}^n T \nu$.  The limiting behavior of the normalized marginal difference reveals homology similar to Theorem \ref{thm: Second}.  

While the results for the Neumann and Dirichlet walks are quite similar, we highlight two differences. One is that the norm of the normalized marginal difference for the Neumann $k$-walk starting at a single oriented simplex stays bounded away from 0 (see Proposition 2.8 of \cite{parzanchevski2012simplicial}), whereas this need not hold for the Dirichlet $k$-walk (as in Corollary \ref{cor: Main}).  This is because in the Neumann case, every starting point $\textbf{1}_{\tau_+}$ has some nonzero inner product with an element of $\im \delta^{k-1} \subseteq \ker \delta^k$.  The second difference is in the threshold values for $p$ in Theorem \ref{thm: Second} and in the corresponding Theorem 2.9 of \cite{parzanchevski2012simplicial}.  For the Dirichlet walk, homology can be detected for $p > \frac{M-2}{3M-4}$ (where $M = \max_{\sigma \in X^{k-1}} \deg(\sigma)$) whereas for the Neumann walk the threshold is $p > \frac{k}{3k+2}$.  Hence, the Neumann walk is sensitive to the dimension while the Dirichlet walk is sensitive to the maximum degree.  In both cases, $p \geq \frac{1}{3}$ is always sufficient to detect homology and $p \geq \frac{1}{2}$ allows us to put a bound on the rate of convergence.



\section{Other  Random Walks}
\label{sec: other}

The examples of the Dirichlet random walk and the Neumann random walk suggest that a more general method for relating matrices to random walks is possible.  So far only the unweighted Laplacian matrices $L_k^\text{up}$ and $L_k^\text{down}$ have been found to relate to random walks, but one might ask whether the full Laplacian matrix $L_k = L_k^\text{up} + L_k^\text{down}$ as well as weighted Laplacians can be related to random walks.  Weighted Laplacians will not be considered in this paper, but can be defined as
\[\mathcal{L}_k = \mathcal{L}_k^\text{up} + \mathcal{L}_k^\text{down} \]
where
\[ \mathcal{L}_k^\text{up} := W_k^{-1/2} \partial_{k+1} W_{k+1} \delta^k W_k^{-1/2} \text{ and } \mathcal{L}_k^\text{down} := W_k^{1/2} \delta^{k-1} W_{k-1}^{-1} \partial_k W_k^{1/2} \]
and where $W_j$ denotes a diagonal matrix with diagonal entries equal to positive weights, one for each $j$-simplex.  In order to make a broad theorem relating Laplacians to random walks, we introduce the following notion of an ``$X^k_+$-matrix''.

\begin{definition}\label{def: matrix}
Let $X^k_+$ be a choice of orientation.  An $X^k_+$-matrix is a square matrix $L$ such that
\begin{enumerate}
\item the rows and columns of $L$ are indexed by $X^k_+$,
\item $L$ has nonnegative diagonal entries,
\item whenever $L$ has a zero on the diagonal, all other entries in the same row or column are also zero.
\end{enumerate}
\end{definition}

\begin{definition}\label{def: props}
Let $X^k_+$ be a choice of orientation, $L$ an $X^k_+$-matrix, and $p \in [0,1]$.  We define the $p$-lazy propagation matrix related to $L$ to be
\[ A_{L,p} := \frac{p(K-1)+1}{K}I - \frac{1-p}{K} \cdot L D_L^{-1} \]
where $p \in [0,1]$, $K := \max_{\sigma_+ \in X^k_+} \sum_{\sigma'_+ \neq \sigma_+} \abs{(L D_L^{-1})_{\sigma'_+, \sigma_+}}$, and $D_L$ is the diagonal matrix with the same nonzero diagonal entries as $L$ and with all other diagonal entries equal to 1 (or any nonzero number, as property (3) of Definition \ref{def: matrix} ensures $L D_L^{-1}$ will be unchanged).  The case $K=0$ is degenerate and not considered.  If $(D_L)_{\sigma_+, \sigma_+} = 0$, then $(D_L^{-1})_{\sigma_+, \sigma_+} = 0$ by convention.  In addition, we define the normalized $p$-lazy propagation matrix relating to $L$ to be
\[ \widetilde{A}_{L,p} := I - \frac{1-p}{p(K-1)+1}L D_L^{-1} \left(= \frac{K}{p(K-1)+1}A_{L,p} \right) \]
\end{definition}

Note that whenever $K = 1$, $A_{L,p} = \widetilde{A}_{L,p}$.  In particular, this is true in the graph case when $L = L_0$.

\begin{definition}
Let $X^k_+$ be a choice of orientation, $L$ an $X^k_+$-matrix, $p \in [0,1]$, and let $A_{L,p}$ be defined as above.  We define $P_{L,p}$ to be the square matrix with rows and columns indexed by $S := X^k_+ \cup \set{\Theta}$ with
\[ (P_{L,p})_{\sigma'_+, \sigma_+} = (P_{L,p})_{\sigma'_-, \sigma_-} = \begin{cases} (A_{L,p})_{\sigma'_+, \sigma_+} & \text{if } (A_{L,p})_{\sigma'_+, \sigma_+} > 0 \\ 0 & \text{else} \end{cases}, \]
\[ (P_{L,p})_{\sigma'_-, \sigma_+} = (P_{L,p})_{\sigma'_+, \sigma_-} = \begin{cases} -(A_{L,p})_{\sigma'_+, \sigma_+} & \text{if } (A_{L,p})_{\sigma'_+, \sigma_+} < 0 \\ 0 & \text{else} \end{cases}, \]
\[ (P_{L,p})_{s, \Theta} = 0 \text{ for all } s \neq \Theta, \]
\[ (P_{L,p})_{\Theta, s} = 1 - \sum_{s' \in S \setminus \set{\Theta}} (P_{L,p})_{s', s} \text{ for all } s \neq \Theta, \]
and
\[ (P_{L,p})_{\Theta, \Theta} = 1. \]
\end{definition}

The following lemma says that $P_{L,p}$ is always a probability matrix.

\begin{lemma}
Let $X^k_+$ be a choice of orientation, $L$ an $X^k_+$-matrix, and $p \in [0,1]$.  The matrix $P_{L,p}$ defined above is the left stochastic matrix for an absorbing Markov chain on the state space $S$ (i.e., $(P_L)_{s',s} = \text{Prob}(s \to s')$) such that $\Theta$ is an absorbing state and $\text{Prob}(s \to s) = p$ for all $s \neq \Theta$.  
\end{lemma}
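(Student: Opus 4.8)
The plan is to verify, directly from the definition of $P_{L,p}$, the three defining features: every entry is nonnegative, every column sums to $1$, and $\Theta$ is absorbing with the stated self-loop probabilities. Two of these are immediate. That $\Theta$ is absorbing is built in: $(P_{L,p})_{\Theta,\Theta}=1$ and $(P_{L,p})_{s,\Theta}=0$ for all $s\ne\Theta$, so the column of $\Theta$ already sums to $1$. Nonnegativity of the entries indexed by pairs of oriented simplexes is also automatic, since by construction $(P_{L,p})_{\sigma'_+,\sigma_+}$ is either a positive entry of $A_{L,p}$ or $0$, and $(P_{L,p})_{\sigma'_-,\sigma_+}$ is either the negative of a negative entry of $A_{L,p}$ or $0$ — hence $\ge 0$ in every case, and likewise with the orientations reversed. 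Thus the only real content is (i) that the remaining mass $(P_{L,p})_{\Theta,s}=1-\sum_{s'\ne\Theta}(P_{L,p})_{s',s}$ is nonnegative for each oriented simplex $s$ — this is exactly what makes those columns sum to $1$ — and (ii) that the self-loop entry is $p$.

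For (i), fix $\sigma_+\in X^k_+$ and split the sum over $S\setminus\set{\Theta}=X^k_\pm$ into its $\sigma'_+$ and $\sigma'_-$ parts. Using the elementary identity $\max(x,0)+\max(-x,0)=\abs{x}$, the two defining clauses for $(P_{L,p})_{\sigma'_+,\sigma_+}$ and $(P_{L,p})_{\sigma'_-,\sigma_+}$ combine to give
\[ \sum_{s'\ne\Theta}(P_{L,p})_{s',\sigma_+} \;=\; \sum_{\sigma'_+\in X^k_+}\abs{(A_{L,p})_{\sigma'_+,\sigma_+}}, \]
so it suffices to show every column of $A_{L,p}$ has $\ell^1$-norm at most $1$. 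Write $N:=L D_L^{-1}$. By property (3) of Definition \ref{def: matrix} together with the choice of $D_L$, the diagonal entry $N_{\sigma_+,\sigma_+}$ is $1$ when $L$ has a nonzero diagonal entry at $\sigma_+$, and is $0$ (with the whole row and column of $N$ vanishing) otherwise. In the first case the diagonal entry of $A_{L,p}$ is $\frac{p(K-1)+1}{K}-\frac{1-p}{K}=p$ and the off-diagonal entries of that column are $-\frac{1-p}{K}N_{\sigma'_+,\sigma_+}$, so
\[ \sum_{\sigma'_+}\abs{(A_{L,p})_{\sigma'_+,\sigma_+}} \;=\; p+\frac{1-p}{K}\sum_{\sigma'_+\ne\sigma_+}\abs{N_{\sigma'_+,\sigma_+}} \;\le\; p+\frac{1-p}{K}\cdot K \;=\; 1, \]
the inequality being precisely the definition of $K$; in the second case the column reduces to its single diagonal entry $\frac{p(K-1)+1}{K}\le 1$. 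Either way the bound holds, hence $(P_{L,p})_{\Theta,\sigma_+}\ge 0$ and the $\sigma_+$-column of $P_{L,p}$ sums to $1$. The $\sigma_-$-column is, by the defining symmetry $(P_{L,p})_{\sigma'_\mp,\sigma_-}=(P_{L,p})_{\sigma'_\pm,\sigma_+}$, a permuted copy of the $\sigma_+$-column, so it sums to $1$ as well.

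For (ii), the computation just made shows $(A_{L,p})_{\sigma_+,\sigma_+}=p\ge 0$ (at indices where $L$ has a nonzero diagonal entry), so this value survives the truncation defining $P_{L,p}$ and $\text{Prob}(\sigma_+\to\sigma_+)=\text{Prob}(\sigma_-\to\sigma_-)=p$. I do not expect a genuine obstacle here — the statement is essentially bookkeeping — and the one place where something has to be arranged just so is the normalizing constant: it is precisely the factor $\tfrac1K$ in front of $L D_L^{-1}$ in $A_{L,p}$ (with the compensating coefficient on $I$) that pushes the column $\ell^1$-norms down to $1$ and thereby leaves a legitimate, possibly positive, probability of passing to the death state $\Theta$. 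The only mild care needed is in translating between the two-sided state space $X^k_\pm$ on which $P_{L,p}$ lives and the one-sided signed matrix $A_{L,p}$, which is handled cleanly by the $\max(x,0)+\max(-x,0)=\abs{x}$ identity.
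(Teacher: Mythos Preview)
Your proof is correct and follows essentially the same route as the paper: verify absorption at $\Theta$ and nonnegativity by inspection, then bound the column $\ell^1$-norm of $A_{L,p}$ by $1$ via the definition of $K$, which both certifies $(P_{L,p})_{\Theta,s}\ge 0$ and yields the self-loop probability $p$. Your use of the identity $\max(x,0)+\max(-x,0)=\abs{x}$ to pass from the two-sided state space to the one-sided signed matrix is exactly the mechanism the paper uses implicitly, and your separate treatment of the zero-diagonal case is a bit more careful than the paper (which tacitly assumes $(LD_L^{-1})_{\sigma_+,\sigma_+}=1$ throughout); indeed, your parenthetical in (ii) correctly flags that the claim $\text{Prob}(s\to s)=p$ can fail at such degenerate indices, a wrinkle the paper's statement and proof simply do not address.
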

\begin{proof}
It is clear by the definition of $P_{L,p}$ that $\Theta$ is an absorbing state.  To see that $\text{Prob}(s \to s) = p$ for all $s \neq \Theta$, note that
\begin{align*}
(A_{L,p})_{\sigma_+, \sigma_+} &= \frac{p(K-1)+1}{K} - \frac{1-p}{K} \cdot 1 \\
                               &= \frac{p(K-1)+1-1+p}{K} = p
\end{align*}
and hence by the definition of $P_{L,p}$,
\[ (P_{L,p})_{\sigma_-, \sigma_-} = (P_{L,p})_{\sigma_+, \sigma_+} = p \]
for all $\sigma$.  It is also clear by the definition of $P_{L,p}$ that the entries $(P_{L,p})_{\sigma'_-, \sigma_+} = (P_{L,p})_{\sigma'_+, \sigma_-}$ are nonnegative for any $\sigma, \sigma'$.  Hence, in order to show that $P_{L,p}$ is left stochastic we need only to prove that $\sum_{s' \in S \setminus \set{\Theta}} (P_{L,p})_{s', s} \leq 1$ for all $s \in S \setminus \set{\Theta}$.  By the symmetries inherent in $P_{L,p}$, the value of the sum is the same for $s = \sigma_+$ as it is for $s = \sigma_-$.  For any $s = \sigma_+$,
\begin{align*}
\sum_{s' \in S \setminus \set{\Theta}} (P_{L,p})_{s', s} &= \sum_{\sigma'_+ \in X^k_+} (A_{L,p})_{\sigma'_+, \sigma_+} \\
                                                         &= p + \sum_{\sigma'_+ \in X^k_+ \setminus \set{\sigma_+}} \abs{(A_{L,p})_{\sigma'_+, \sigma_+}} \\
                                                         &= p + \frac{1-p}{K}\sum_{\sigma'_+ \in X^k_+ \setminus \set{\sigma_+}} \abs{(L D_L^{-1})_{\sigma'_+, \sigma_+}} \\
                                                         &\leq p + (1-p) = 1.
\end{align*}
This completes the proof.
\end{proof}

We will call $P_{L,p}$ the $p$-lazy probability matrix related to $L$.  The following theorem shows that $P_{L,p}$ is related $L$.

\begin{theorem}
Let $X^k_+$ be a choice of orientation, $L$ an $X^k_+$-matrix, $p \in [0,1]$, and let $A_{L,p}$ and $P_{L,p}$ be defined as above.  In addition, let $T_+$ be defined as in section \ref{sec: Main}.  Then
\[ A_{L,p} T = T P_{L,p}. \]
In other words, the evolution of the marginal differences $T_+ P_{L,p}^n \nu$ after $n$ steps with initial distribution $\nu$ is governed by the propagation matrix: $T P_{L,p}^n \nu = A_{L,p}^n T \nu$.
\end{theorem}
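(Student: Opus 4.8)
The plan is to mimic the proof of Proposition~\ref{prop: Main} almost verbatim, the only new ingredient being that the ``similarly oriented'' and ``dissimilarly oriented'' transition probabilities of $P_{L,p}$ are now defined purely by the \emph{sign} of the corresponding entry of $A_{L,p}$, rather than by the combinatorics of lower adjacency. Since $A_{L,p}T = TP_{L,p}$ is an equality of matrices of the same shape --- both have rows indexed by $X^k_+$ and columns indexed by $S$ --- it suffices to check it column by column, i.e.\ to verify $(A_{L,p}T)_{\sigma_+, s} = (TP_{L,p})_{\sigma_+, s}$ for every $\sigma_+ \in X^k_+$ and every $s \in S$. Because the nonzero entries of row $\sigma_+$ of $T$ are $(T)_{\sigma_+,\sigma_+}=1$ and $(T)_{\sigma_+,\sigma_-}=-1$, the left-hand side reduces to $(P_{L,p})_{\sigma_+,s}-(P_{L,p})_{\sigma_-,s}$, while the right-hand side is $A_{L,p}$ applied to the column of $T$ indexed by $s$.

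First I would handle the right-hand side. The column of $T$ indexed by $s$ is the zero vector if $s=\Theta$, is $\textbf{1}_{\tau_+}$ if $s=\tau_+$, and is $-\textbf{1}_{\tau_+}$ if $s=\tau_-$; hence $(A_{L,p}T)_{\sigma_+,\Theta}=0$, $(A_{L,p}T)_{\sigma_+,\tau_+}=(A_{L,p})_{\sigma_+,\tau_+}$, and $(A_{L,p}T)_{\sigma_+,\tau_-}=-(A_{L,p})_{\sigma_+,\tau_+}$. For the left-hand side I would plug in the defining formulas for $P_{L,p}$: if $s=\Theta$ both terms vanish; if $s=\tau_+$, then $(P_{L,p})_{\sigma_+,\tau_+}=\max\set{(A_{L,p})_{\sigma_+,\tau_+},0}$ and $(P_{L,p})_{\sigma_-,\tau_+}=\max\set{-(A_{L,p})_{\sigma_+,\tau_+},0}$, so their difference is exactly $(A_{L,p})_{\sigma_+,\tau_+}$ whatever its sign (the diagonal case $\tau=\sigma$, where the entry is $p\ge 0$, is subsumed); and if $s=\tau_-$, the symmetry relations $(P_{L,p})_{\sigma_+,\tau_-}=(P_{L,p})_{\sigma_-,\tau_+}$ and $(P_{L,p})_{\sigma_-,\tau_-}=(P_{L,p})_{\sigma_+,\tau_+}$ reduce the computation to the previous case with an overall minus sign. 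In every case the two sides agree, which gives $A_{L,p}T=TP_{L,p}$.

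The ``in other words'' assertion then follows by a trivial induction: $TP_{L,p}^{n}=(TP_{L,p})P_{L,p}^{n-1}=(A_{L,p}T)P_{L,p}^{n-1}=A_{L,p}(TP_{L,p}^{n-1})=\cdots=A_{L,p}^{n}T$, and applying both sides to a column vector $\nu$ yields $TP_{L,p}^{n}\nu=A_{L,p}^{n}T\nu$. I do not expect any genuine obstacle here; the only place demanding care is the sign bookkeeping in the middle step, namely the observation that the elementary decomposition $x=\max\set{x,0}-\max\set{-x,0}$ of a real number is precisely how $P_{L,p}$ was engineered to store the entry $(A_{L,p})_{\sigma_+,\tau_+}$ --- routing its positive part to the $\sigma_+\to\tau_+$ transition and its negative part to the $\sigma_+\to\tau_-$ transition --- so that $T$, which subtracts the $\tau_-$-component from the $\tau_+$-component, recovers the entry exactly. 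Everything else is formal matrix algebra identical in spirit to Proposition~\ref{prop: Main}.
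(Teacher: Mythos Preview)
Your proposal is correct and follows essentially the same entry-by-entry verification as the paper's own proof, computing $(TP_{L,p})_{\sigma_+,s}$ and $(A_{L,p}T)_{\sigma_+,s}$ separately for $s=\Theta$, $s=\tau_+$, and $s=\tau_-$ and matching them up. If anything, your write-up is more explicit than the paper's, since you spell out the key identity $x=\max\{x,0\}-\max\{-x,0\}$ that makes the sign bookkeeping transparent, whereas the paper simply asserts the case split without comment.
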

\begin{proof}
Using the definition of $T$
\begin{align*} (T P_{L,p})_{\sigma_+,s} &= (P_{L,p})_{\sigma_+,s} - (P_{L,p})_{\sigma_-,s} \\
                                          &= \begin{cases} \pm (A_{L,p})_{\sigma_+,\sigma'_+} & s = \sigma'_{\pm} \\ 0 & s = \Theta \end{cases}.
\end{align*}
Similarly, note that $(A_{L,p} T)_{\sigma_+,s} = A_{L,p} (T \textbf{1}_s) (\sigma_+)$ where $\textbf{1}_s$ is the vector assigning 1 to $s \in S$ and 0 to all other elements in $S$.  If $s = \Theta$, $T \textbf{1}_s$ is the zero vector.  Otherwise, if $s = \tau_{\pm}$ then $T \textbf{1}_s = \pm \textbf{1}_{\tau_+}$.  Thus,
\begin{align*} (A_{L,p} T)_{\sigma_+,s} &= \begin{cases} \pm A_{L,p} \textbf{1}_{\tau_+} (\sigma_+) & s = \tau_{\pm} \\ 0 & s = \Theta \end{cases} \\
                                          &= \begin{cases} \pm (A_{L,p})_{\sigma_+,\tau_+} & s = \tau_{\pm} \\ 0 & s = \Theta \end{cases}.
\end{align*}                                    
This concludes the proof.
\end{proof}

Finally, we conclude with a few results motivating the normalized propagation matrix and showing how the limiting behavior of the marginal difference relates to the kernel and spectrum of $L$.  We strongly suspect stronger results hold.

\begin{theorem}
Let $X^k_+$ be a choice of orientation, $L$ an $X^k_+$-matrix with $\text{Spec}(L) \subset [0, \Lambda]$ ($\Lambda > 0$).  Then for $\frac{\Lambda - 1}{K + \Lambda - 1} \leq p < 1$ the following statements hold:
\begin{enumerate}
\item $\norm{A_{L,p}^n T \nu}_2 \to 0$ for every initial distribution $\nu$,
\item $\widetilde{A}_{L,p}^n T \nu \to \prj_{\ker L} T\nu$ for every initial distribution $\nu$, where $\prj_{\ker L}$ denotes the projection map onto the kernel of $L$,
\item If $\lambda$ is the spectral gap (smallest nonzero eigenvalue) of $L$ then 
\[ \norm{\widetilde{A}_{L,p}^n T \nu - \prj_{\ker L} T \nu}_2 = O\left( \left[ 1 - \frac{1-p}{p(K-1)+1}\lambda \right]^n \right). \]
\end{enumerate} 
\end{theorem}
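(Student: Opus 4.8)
The plan is to carry out the same spectral argument as in the proofs of Theorems~\ref{thm: Main} and~\ref{thm: Second}, now applied to the single matrix $N := L D_L^{-1}$, of which $A_{L,p}$ and $\widetilde A_{L,p}$ are both affine functions. The first step is to pin down $\text{Spec}(N)$, $\ker N$, and the eigenstructure of $N$. For the matrices of interest ($L$ a symmetric positive semidefinite Laplacian, weighted or not), $N$ is similar, via conjugation by $D_L^{1/2}$, to the symmetric positive semidefinite matrix $\widehat N := D_L^{-1/2} L D_L^{-1/2}$; hence $N$ is diagonalizable with $\text{Spec}(N) = \text{Spec}(\widehat N) \subseteq [0,\Lambda]$, the top-eigenvalue bound coming from the Rayleigh quotient $y^\top L y / y^\top D_L y$ together with the fact that, after collapsing the zero rows and columns permitted by Definition~\ref{def: matrix}, the diagonal of $L$ (hence of $D_L$) is bounded below by $1$. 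When $D_L$ is a scalar matrix --- as for $L = L_k^\text{down}$ or the graph Laplacian --- one moreover has $N$ symmetric, $\ker N = \ker L$, and $\text{Spec}(N)$ a fixed rescaling of $\text{Spec}(L)$; I will run the main line of argument in this case and indicate the modifications for the general (normalized-Laplacian) case afterward.

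The heart of the matter is the eigenvalue bookkeeping that makes the threshold appear. An eigenvalue $\mu \in \text{Spec}(N)$ contributes the eigenvalue $\tfrac{p(K-1)+1}{K} - \tfrac{1-p}{K}\mu$ to $A_{L,p}$ and $1 - \tfrac{1-p}{p(K-1)+1}\mu$ to $\widetilde A_{L,p} = \tfrac{K}{p(K-1)+1}A_{L,p}$. The latter is always $\le 1$, with equality exactly on $\ker N$, and it is $\ge 0$ for every $\mu \le \Lambda$ precisely when $(1-p)\Lambda \le p(K-1)+1$, i.e.\ when $p \ge \tfrac{\Lambda-1}{K+\Lambda-1}$ --- the same sort of computation as the displayed $\Updownarrow$-chain in the proof of Theorem~\ref{thm: Second}. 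So in the stated range of $p$ we obtain $\text{Spec}(\widetilde A_{L,p}) \subseteq [0,1]$ and $\text{Spec}(A_{L,p}) \subseteq \bigl[0,\tfrac{p(K-1)+1}{K}\bigr]$, with $\tfrac{p(K-1)+1}{K} < 1$ whenever $p < 1$ and $K \ge 2$ (the case $K = 1$, in which $A_{L,p} = \widetilde A_{L,p}$, must be handled by hand).

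The three conclusions are then immediate, following the templates of Theorems~\ref{thm: Main}--\ref{thm: Second}. For (1): the vectors $T\nu$ span $C^k$ (indeed $T\textbf{1}_{\tau_+} = \textbf{1}_{\tau_+}$), so $\norm{A_{L,p}^n T\nu}_2 \to 0$ for all $\nu$ is exactly the assertion that the spectral radius $\tfrac{p(K-1)+1}{K}$ of $A_{L,p}$ is strictly less than $1$. For (2): expand $T\nu$ in an eigenbasis of $\widetilde A_{L,p}$; eigenvalues in $[0,1)$ are killed in the limit and the eigenvalue $1$ survives, so $\widetilde A_{L,p}^n T\nu$ converges to the spectral projection of $T\nu$ onto $\ker N$, which is $\prj_{\ker L} T\nu$ when $D_L$ is scalar and, in general, is the analogous projection read off through the symmetric matrix $\widehat A := I - \tfrac{1-p}{p(K-1)+1}\widehat N$. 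For (3): among the eigenvalues of $\widetilde A_{L,p}$ other than $1$, the largest is $1 - \tfrac{1-p}{p(K-1)+1}\lambda$ where $\lambda$ is the smallest nonzero eigenvalue of $N$ (equal, up to the fixed normalization, to that of $L$, so reproducing the $\tfrac{1-p}{(p(M-2)+1)(k+1)}\lambda_k$ of Theorem~\ref{thm: Second}(3) when $L = L_k^\text{down}$), and the bound $\norm{\widetilde A_{L,p}^n T\nu - \prj_{\ker L} T\nu}_2 \le \bigl(1 - \tfrac{1-p}{p(K-1)+1}\lambda\bigr)^n \norm{T\nu}_2$ is the same chain of inequalities that closes the proof of Theorem~\ref{thm: Second}.

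The main obstacle I anticipate is the non-symmetry of $N = L D_L^{-1}$ when $D_L$ is not a scalar matrix (the Neumann/weighted case). Then the eigenbasis of $\widetilde A_{L,p}$ is not orthonormal, so the clean manipulations ``$f = \sum \alpha_j f_j$, $\norm{f}_2 = \sum \abs{\alpha_j}$'' of Theorems~\ref{thm: Main}--\ref{thm: Second} no longer apply verbatim; the remedy is to transport all estimates through the genuinely symmetric $\widehat N$ (equivalently, to work with the $D_L^{-1}$-weighted inner product, with respect to which $N$ is self-adjoint), which forces one to be careful about which projection ``$\prj_{\ker L}$'' denotes and introduces a constant depending only on $\norm{D_L^{1/2}}\norm{D_L^{-1/2}}$ in the rate estimate of (3), harmlessly absorbed by the $O(\cdot)$. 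A secondary loose end, relevant to (1), is the $K = 1$ corner case, where $A_{L,p} = \widetilde A_{L,p}$ has spectral radius $1$ and one must argue as in Corollary~\ref{cor: Main} that the relevant component of $T\nu$ vanishes.
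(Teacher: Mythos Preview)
Your proposal is correct and follows essentially the same route as the paper: reduce everything to the spectrum of $\widetilde A_{L,p}$ (equivalently of $N = L D_L^{-1}$), verify that the threshold $p \ge \tfrac{\Lambda-1}{K+\Lambda-1}$ places $\text{Spec}(\widetilde A_{L,p}) \subseteq [0,1]$ with the $1$-eigenspace equal to $\ker L$, and then repeat the convergence arguments of Corollary~\ref{cor: B} and Theorems~\ref{thm: Main}--\ref{thm: Second}. The paper's own proof is a two-sentence sketch that says exactly this and nothing more; in particular it does not address the passage from $\text{Spec}(L)$ to $\text{Spec}(L D_L^{-1})$, the diagonalizability of the possibly non-symmetric $N$, or the $K=1$ degeneracy in statement~(1) --- all issues you correctly flag and (mostly) handle.

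One caution: your claim that, after collapsing zero rows and columns, the diagonal of an $X^k_+$-matrix is bounded below by $1$ is not part of Definition~\ref{def: matrix} and need not hold in general, so your Rayleigh-quotient bound $\text{Spec}(\widehat N) \subseteq [0,\Lambda]$ does not follow from the stated hypothesis $\text{Spec}(L) \subseteq [0,\Lambda]$ alone. The paper's proof is equally silent on this point, and for the unweighted Laplacians that motivate the section the diagonal entries are positive integers, so this is better viewed as a loose end in the theorem statement than a defect in your argument.
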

\begin{proof}
The proof is the same as in the proofs of Corollary \ref{cor: B} and Theorem \ref{thm: Second} and mostly boil down to statements about the spectra of $A_{L,p}$ and $\widetilde{A}_{L,p}$.  Note that since $\frac{\Lambda - 1}{K + \Lambda - 1} \leq p < 1$, $\text{Spec}(\widetilde{A}_{L,p}) \subset [0,1]$ where the eigenspace of the eigenvalue 1 is equal to the kernel of $L$, and the largest eigenvalue of $\widetilde{A}_{L,p}$ less than 1 is $1 - \frac{1-p}{p(K-1)+1}\lambda$.  
\end{proof}

As an example of the applicability of this framework, $\widetilde{A}_{L,p}$ is used with $L = L_k$ to perform label propagation on edges in the next section.

\section{Examples of random walks}

In this section we state some specific random walks to provide some intuition for random walks on complexes and to use the ideas we have developed to
study a problem in machine learning, semi-supervised learning.

\subsection{Triangle complexes}\label{sec: ex}
We begin by reviewing local random walks on graphs as defined by Fan Chung in \cite{chung2007random}.  Given a graph $G = (V,E)$ and a designated ``boundary'' subset $S \subset V$, a $\frac{1}{2}$-lazy random walk on $\overline{S} = V \setminus S$ can be defined which satisfies a Dirichlet boundary condition on $S$ (meaning a walker is killed whenever it reaches $S$).  The walker starts on a vertex $v_0 \in \overline{S}$ and at each step remains in place with probability $\frac{1}{2}$ or else jumps to one of the adjacent vertices with equal probability.  The boundary condition is enforced by declaring that whenever the walker would jump to a vertex in $S$, the walk ends.  Thus, the left stochastic matrix $P$ for this walk can be written down as
\[ (P)_{v',v \in \overline{S}} = \text{Prob}(v \to v') = \begin{cases}
                \frac{1}{2} & \text{if } v = v' \\
		\frac{1}{2 d_v} & \text{if } v \sim v' \\
		0 & \text{else}
               \end{cases}
\]
where $v \sim v'$ denotes that vertices $v$ and $v'$ are adjacent and $d_v$ is the number of edges connected to $v$.  Note that $P$ is indexed only by $\overline{S}$, and that its columns sums may be less than 1.  The probability of dying is implicitly encoded in $P$ as the difference between the column sum and 1.  As was shown in \cite{chung2007random}, $P$ is related to a local Laplace operator also indexed by $\overline{S}$.  If $D$ is the degree matrix and $A$ the adjacency matrix, the graph Laplacian of $G$ is $L = D - A$.  We denote the local Laplacian as $L_S$, where $S$ in subscript means rows and columns indexed by $S$ have been deleted.  The relation between $P$ and $L_S$ is
\[ P = I - \frac{1}{2}L_S D_S^{-1}. \]
Hence, the existence and rate of convergence to a stationary distributions can be studied in terms of the spectrum of the local Laplace operator.  

Now suppose we are given an orientable 2-dimensional non-branching simplicial complex $X = (V,E,T)$ where $T$ is the set of triangles (subsets of $V$ of size 3).  Non-branching means that every edge is contained in at most 2 triangles.  We can define a random walk on triangles fundamentally identical to a local walk on a graph which reveals the 2-dimensional homology of $X$.  The $\frac{1}{2}$-lazy Dirichlet $2$-walk on $T$ starts at a triangle $t_0$ and at each step remains in place with probability $\frac{1}{2}$ or else jumps to the other side of one of the three edges.  If no triangle lies on the other side of the edge, the walk ends.  The transition matrix $B$ for this walk is given by
\[ (B)_{t',t} = \text{Prob}(t \to t') = \begin{cases}
                 \frac{1}{2} & \text{if } t = t' \\
		 \frac{1}{6} & \text{if } t \sim t' \\
		 0 & \text{else}
                \end{cases}
\]
where $t \sim t'$ denotes $t$ and $t'$ share an edge.  This is the same transition matrix as $P$, in the case that $d_v = 3$ for all $v \in \overline{S}$.  In this case, the analog of the set $S$ is the set of edges that are contained in only one triangle, which is the boundary of $X$.  To draw an explicit connection, imagine adding a triangle to each boundary edge, obtaining a larger complex $\widetilde{X}=(\widetilde{V},\widetilde{E},\widetilde{T})$.  See Figure \ref{fig: phantom}

\begin{figure}
\centering
\resizebox{0.95\textwidth}{!}{\input{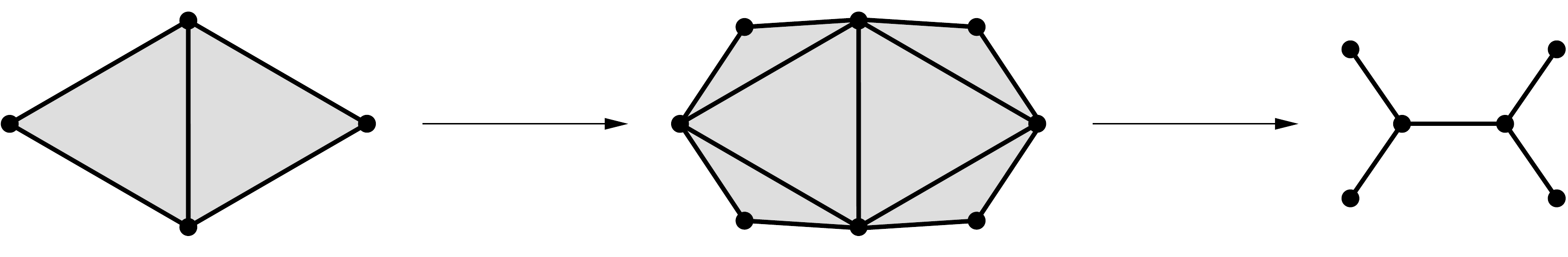_t}}\\
\caption{Making the Dirichlet boundary condition explicit, and translating into a graph.}
\label{fig: phantom}
\end{figure}

Then take the ``dual graph'' $G=(V,E)$ of $\widetilde{X}$ by thinking of triangles as vertices (so, $V = \widetilde{T}$) and connecting vertices in $G$ with an edge if the corresponding triangles in $\widetilde{X}$ share an edge.  Choose the vertices corresponding to the added triangles $\widetilde{T}\setminus T$ to be the boundary set $S$.  Now the matrix $P$ associated to the local random walk on $G$ is indistinguishable from the matrix $B$ associated to the random walk on $X$.  In addition, it can be seen that $L_S$ on $G$ is the same as $L_2$, the 2-dimensional Laplacian on $X$ defined with respect to a given orientation  we 
have assumed orientability assumption). The following states the relation between the transition matrices and Laplacians:
\[ B = P = I - \frac{1}{6} L_S = I - \frac{1}{6}L_2. \]
See section \ref{sec: Def} for the definition of $L_2$, and the appendix of \cite{steenbergen2012cheeger} for more on the connection between 
$L_S$ and $L_2$.  

It is a basic fact that the kernel of $L_2$ corresponds to the 2-dimensional homology group of $X$ over $\R$.  Therefore, there exists a stationary distribution for the random walk if and only if $X$ has nontrivial homology in dimension 2.  Additionally, the rate of convergence to the stationary distribution (if it exists) is governed by the spectral gap of $L_2$.  In particular, the following statements hold:
\begin{enumerate}
 \item Given a starting triangle $t_0$, the marginal distribution of the random walk after $n$ steps is $\mathcal{E}_n^{t_0} := B^n \textbf{1}_{t_0}$ where $\textbf{1}_{t_0}$ is the vector assigning a 1 to $t_0$ and 0 to all other triangles.  For any $t_0$, the marginal distrubition converges, i.e., $\mathcal{E}_\infty^{t_0}:=\lim_{n \to \infty} \mathcal{E}_n^{t_0}$ exists.
 \item The limit $\mathcal{E}_\infty^{t_0}$ is equal to 0 for all starting triangles $t_0$ if and only if $X$ has trivial homology in dimension 2 over $\R$.
 \item The rate of convergence is given by
 \[ \norm{\mathcal{E}_n^{t_0} - \mathcal{E}_\infty^{t_0}}_2 = O\left(\left[1 - \frac{1}{6}\lambda_2\right]^n\right) \]
 where $\lambda_2$ is the smallest nonzero eigenvalue of $L_2$.
\end{enumerate}

The example given here is constrained by certain assumptions (orientability and the non-branching property), which allows for the most direct interpretation with respect to previous work done on graphs.

\subsection{Label propagation on edges}

In machine learning random walks on graphs have been used for semi-supervised learning.  In this section we will generalize a class of algorithms on
graphs called ``label propogation" algorithms to simplicial complexes, specifically we extend the algorithm described in \cite{zhu2005semi} (for more examples, see \cite{callut2008semi,jaakkola2002partially,zhou2004learning}). The goal of semi-supervised classification learning is to classify a set of unlabelled objects $\set{v_1, \ldots, v_u}$, given a small set of labelled objects $\set{v_{u+1}, \ldots, v_{u+\ell}}$ and a set $E$ of pairs of objects $\set{v_i,v_j}$ that one believes \textit{a priori} to share the same class.  Let $G = (V,E)$ be the graph with vertex set $V = \set{v_1, \ldots, v_{u+\ell}}$ and let $P$ be the probability matrix for the usual random walk, i.e.,
\[ (P)_{ij} = \text{Prob}(v_j \to v_i) = \frac{1}{d_j} \]
where $d_j$ is the degree of vertex $j$.  We denote the classes an object belongs to as $c=1,...,C$ and an initial distribution $f_0^c : V \to [0,1]$
is the \textit{a priori} confidence that each vertex is in class $c$, a recursive label propagation process proceeds as follows.
\begin{enumerate}
\item For $t =1,..., T$ and $c=1,..,C$:
\begin{enumerate}
\item Set $f_{t}^c  \leftarrow P f_{t-1}^c$
\item Reset $f_{t}^c(v_i) = 1$ for all $v_i$ labelled as $c$.
\end{enumerate}
\item Consider $f_T^c$ as an estimate of the relative confidence that each object is in class $c$.
\item For each unlabelled point $v_i$, $i \leq u$, assign the label 
\[\argmax_{c=1,..C} \{f_T^{c}(v_i)\}.\]
\end{enumerate}
The number of steps $T$ is set to be large enough such that $f_T^c$ is close to its limit $f_\infty^c := \lim_{T \to \infty} f_T^c$.  If $G$ is connected, it can be shown that $f_\infty^c$ is independent of the choice of $f_0^c$.  Even if $G$ is disconnected, the algorithm can be performed on each connected component separately and again the limit $f_\infty^c$ for each component will be independent of the choice of $f_0^c$.

We will now adapt the label propagation algorithm to higher dimensional walks, namely, walks on oriented edges. Given any random walk on the set of oriented edges (and an absorbing death state $\Theta$), its probability transition matrix $P$ could be used to propagate labels in the same manner as the above algorithm. However, this will treat and label the two orientations of a single edge separately as though they are unrelated. As found in this paper and in \cite{parzanchevski2012simplicial}, geometric meaning and interesting long-term behavior is obtained by transforming and normalizing $P$ into a normalized propagation matrix, and applying it not to functions on the state space but to $1$-cochains.  In this way we will infer only one label per edge. One major change, however, is that labels will become oriented themselves.  That is, given an oriented edge $e_+$ and a class $c$, the propagation algorithm may assign a positive confidence that $e_+$ belongs to class $c$ or a negative confidence that $e_+$ belongs to class $c$, which we view as a positive confidence that $e_+$ belongs to class $-c$ or, equivalently, that $e_-$ belongs to class $c$.  This construction applies to systems in which every class has two built-in orientations or signs, or the class information has a directed sense of ``flow''.

For example, imagine water flowing along a triangle complex in two dimensions.  Given an oriented edge, the water may flow in the positive or negative direction along the edge.  A ``negative'' flow of water in the direction of $e_+$ can be interpreted as a positive flow in the direction of $e_-$.  Perhaps the flow along a few edges is observed and one wishes to infer the direction of the flow along all the other edges.  Unlike in the graph case, a single class of flow already presents a classification challenge.  Or consider multiple streams of water colored according to the $C$ classes, we may want to know which stream dominates the flow along each edge and in which direction.  In order to make these inferences, it is necessary to make some assumption about how labels should propagate from one edge to the next.  When considering water flow, it is intuitive to make the following two assumptions.

\begin{enumerate}
\item \textbf{Local Consistency of Motion.}  If water is flowing along an oriented edge $[v_i,v_j]$ in the positive direction, then for every triangle $[v_i, v_j, v_k]$ the water should also tend to flow along $[v_i, v_k]$ and $[v_k, v_j]$ in the positive directions.
\item \textbf{Preservation of Mass.}  The total amount of flow into and out of each vertex (along edges connected to the vertex) should be the same.
\end{enumerate}

In fact, either one of these assumptions is sufficient to infer oriented class labels given the observed flow on a few edges.  Depending on which assumptions one chooses, different normalized propagation matrices $\widetilde{A}_{L,p}$ (see section \ref{sec: other}) may be applied. For example, $L = L_1^\text{up}$ will enforce local consistency of motion without regard to preservation of mass, while $L = L_1^\text{down}$ will do the opposite. A reasonable way of preserving both assumptions is by using $L = L_1$ as shown in Example \ref{ex: L1}.  

We now state a simple algorithm, analogous to the one for graphs, that propagates labels on edges to infer a partially-observed flow. Let $X$ be a simplicial complex of dimension $d \geq 1$ and let $X^1_+ = \set{e_1, \ldots, e_n}$ be a choice of orientation for the set of edges.  Without loss of generality, assume that oriented edges $e_u+1, \ldots, e_{n = u+\ell}$ have been classified with class $c$ (not $-c$).  Similar to the graph case, we apply a recursive label propagation process to an initial distribution vector $f_0^c : X^1_+ \to \R$ measuring the \textit{a priori} confidence that each oriented edge is in class $c$.  
See Algorithm \ref{alg1} for the procedure. The result of the algorithm is a set of estimates of the relative confidence that each edge is in class $c$ with some orientation.

\begin{algorithm}
\caption{Edge propagtion algorithm.}\label{alg1}
 \SetAlgoLined
 \KwData{Simplicial complex $X$, set of oriented edges \[X^1_+ = \{e_1, \ldots, e_u, e_{u+1},...,e_{u+\ell}\}\] with $e_{u+1}, \ldots, e_{u + \ell}$ labelled with oriented classes $\pm 1,.., \pm C$, initial distribution vector $f_0^c : X^1_+ \to \R$, number of iterations $T$}
 \KwResult{Confidence of class membership and direction for unlabelled edges $\{f_*^c(e_1) ,..., f_*^c(e_u)\}_{c=1}^C$}
 \For{$c=1$ to $C$}
	{\For{$t=1$ to $T$}
		{ $f_{t}^c \leftarrow \widetilde{A}_{L,p}f_{t-1}^c$\;
		 $f_{t}^c(e_i) \leftarrow 1$ for $e_i$ labelled with class $c$\;
		 $f_{t}^c(e_i) \leftarrow -1$ for $e_i$ labelled with class $-c$ }
 	}
 {$\{f_*^c(e_1) ,..., f_*^c(e_u)\}_{c=1}^C \leftarrow \{f_T^c(e_1) ,..., f_T^c(e_u)\}_{c=1}^C$\;}
\end{algorithm}

After running the algorithm, an unlabelled edge $e_i$ is assigned the oriented class $\text{sgn}(f_T^c(e_i))c$ where $c = \argmax_{c=1,..C} \set{\abs{f_T^{c}(e_i)}}$.

We now prove that given enough iterations $T$ the algorithm converges and the resulting assigned labels are meaningful. The proof uses the same methods as the one found in \cite{zhu2005semi} for the graph case.  

\begin{proposition}
Using the notation of section \ref{sec: other}, assume that $L$ is a symmetric $X^k_+$-matrix with $\text{Spec}(L D_L^{-1}) \subset [0, \Lambda]$.  Let $\widetilde{A}_{L,p}$ be the normalized $p$-lazy propagation matrix as defined in \ref{def: props}.  If $\frac{\Lambda - 2}{2K + \Lambda - 2} < p < 1$ and if no vector in $\ker L$ is supported on the set of unclassified edges, then Algorithm \ref{alg1} converges.  That is,
\[ \lim_{T \to \infty} f^c_T =: f_\infty^c = \begin{pmatrix} \psi^c \\ (I-A_4)^{-1}A_3 \psi^c \end{pmatrix}, \] 
where $A_4$ and $A_3$ are submatrices of $\widetilde{A}_{L,p}$ and $\psi^c$ is the class function on edges labelled with $\pm c$ (for which $\psi^c(e_i) = \pm 1$).  In addition, $f_\infty^c$ depends neither on the initial distribution $f_0^c$ nor on the lazy probability $p$.
\end{proposition}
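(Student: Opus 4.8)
The plan is to mimic the block fixed-point argument used for the graph case in \cite{zhu2005semi}, now applied to the normalized propagation matrix $\widetilde{A}_{L,p}$. Reorder the oriented edges so that the $\ell$ labeled ones come first and the $u$ unclassified ones come last, and write $\widetilde{A}_{L,p}$ in the induced block form $\begin{pmatrix} A_1 & A_2 \\ A_3 & A_4 \end{pmatrix}$, so that $A_4$ is the principal submatrix indexed by the unclassified edges. Splitting $f_t^c = \begin{pmatrix} \psi_t \\ g_t \end{pmatrix}$ accordingly, the clamping step of Algorithm~\ref{alg1} forces $\psi_t = \psi^c$ for every $t \geq 1$, so for $t \geq 2$ the iteration collapses to the affine recursion $g_t = A_3 \psi^c + A_4 g_{t-1}$. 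Thus the whole statement reduces to showing that $A_4$ is a strict contraction, i.e.\ $\rho(A_4) < 1$: granting this, $g_t$ converges geometrically to the unique fixed point $(I - A_4)^{-1} A_3 \psi^c$ regardless of $g_1$, hence regardless of $f_0^c$.

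Next I would control $\text{Spec}(A_4)$. By Definition~\ref{def: props}, $\widetilde{A}_{L,p} = I - c_p\, L D_L^{-1}$ with $c_p := \frac{1-p}{p(K-1)+1} > 0$, so $\text{Spec}(\widetilde{A}_{L,p}) \subset [1 - c_p \Lambda, 1]$, and a short computation shows the hypothesis $\frac{\Lambda - 2}{2K + \Lambda - 2} < p < 1$ is precisely equivalent to $c_p \Lambda < 2$, i.e.\ to $\text{Spec}(\widetilde{A}_{L,p}) \subset (-1, 1]$. Since $L$ is symmetric and $D_L$ is diagonal, $\widetilde{A}_{L,p}$ is conjugate, via the diagonal matrix $D_L^{1/2}$, to the symmetric matrix $\widehat{A} := I - c_p\, D_L^{-1/2} L D_L^{-1/2}$, whose spectrum also lies in $(-1,1]$ because $D_L^{-1/2} L D_L^{-1/2}$ is similar to $L D_L^{-1}$. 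As $D_L^{1/2}$ is diagonal, this conjugation respects the block decomposition, so $A_4$ is similar to the principal submatrix $\widehat{A}_4$ of $\widehat{A}$. (Any zero diagonal entry of $D_L$ forces the whole corresponding row and column of $L$ to vanish, by property (3) of an $X^k_+$-matrix, so under the hypothesis such entries occur only at labeled edges; hence $(D_L)_{uu}$ is invertible and the similarity $A_4 \sim \widehat A_4$ is genuine.) Cauchy interlacing applied to the symmetric matrix $\widehat A$ then gives $\text{Spec}(A_4) = \text{Spec}(\widehat A_4) \subset (-1, 1]$, so the only possible obstruction to contraction is the eigenvalue $1$.

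Excluding the eigenvalue $1$ from $\text{Spec}(A_4)$ is the step I expect to require the most care, and it is exactly here that the assumption ``no nonzero vector of $\ker L$ is supported on the unclassified edges'' is used. Suppose $\widehat A_4 w = w$ for some $w \neq 0$ supported on the unclassified edges, and let $\widetilde w$ be its extension by zero to all edges. Since $\widetilde w$ vanishes on the labeled coordinates, $\widetilde w^{T} \widehat A\, \widetilde w = w^{T} \widehat A_4 w = \norm{w}_2^2 = \norm{\widetilde w}_2^2$; because $\widehat A \preceq I$, this forces $\widehat A\, \widetilde w = \widetilde w$, i.e.\ $D_L^{-1/2} L D_L^{-1/2} \widetilde w = 0$. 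Conjugating back through the diagonal matrix produces a nonzero element of $\ker L$ supported on the unclassified edges, a contradiction. Hence $1 \notin \text{Spec}(A_4)$, and combined with the previous paragraph $\rho(A_4) < 1$.

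Finally I would assemble the pieces. Contraction of $A_4$ makes the affine recursion converge, so $f_T^c \to f_\infty^c = \begin{pmatrix} \psi^c \\ (I-A_4)^{-1} A_3 \psi^c \end{pmatrix}$ with error $O(\rho(A_4)^T)$, and this limit is manifestly independent of $f_0^c$. Independence of $p$ then follows by cancelling $c_p$: from $\widetilde A_{L,p} = I - c_p L D_L^{-1}$ one reads off $A_4 = I - c_p (L D_L^{-1})_{uu}$ and $A_3 = -c_p (L D_L^{-1})_{u\ell}$, hence $I - A_4 = c_p (L D_L^{-1})_{uu}$ (in particular $(L D_L^{-1})_{uu}$ is invertible) and $(I - A_4)^{-1} A_3 = -\big[(L D_L^{-1})_{uu}\big]^{-1} (L D_L^{-1})_{u\ell}$, an expression in which $p$ does not appear. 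The main obstacle throughout is the eigenvalue-$1$ exclusion: without the kernel hypothesis the recursion can fail to contract, and turning that hypothesis into a statement about the submatrix $\widehat A_4$ is what the symmetrization-plus-interlacing device is designed to do.
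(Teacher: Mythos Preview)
Your proposal is correct and follows essentially the same route as the paper: block decomposition into labeled/unlabeled coordinates, reduction to the contractivity of $A_4$, symmetrization via conjugation by $D_L^{1/2}$, spectral containment of the principal submatrix inside $\text{Spec}(D_L^{-1/2} L D_L^{-1/2}) \subset [0,\Lambda]$, use of the kernel hypothesis to exclude the boundary eigenvalue, and finally cancellation of $c_p$ to show $p$-independence.

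The only noteworthy difference is packaging. Where you invoke Cauchy interlacing by name and then exclude the eigenvalue $1$ via the one-line PSD argument $\widetilde w^{T}(I-\widehat A)\widetilde w = 0 \Rightarrow (I-\widehat A)\widetilde w = 0$, the paper instead works directly with $D_4^{-1/2} L_4 D_4^{-1/2}$, expands an eigenvector (extended by zeros) in an orthonormal eigenbasis of $D_L^{-1/2} L D_L^{-1/2}$, and reads off both the spectral bound and the $\lambda = 0$ exclusion from the resulting coefficient identities. Your version is a bit more economical; the paper's version is more self-contained in that it does not assume interlacing as a black box. Either way the content is the same, and your handling of the degenerate diagonal entries of $D_L$ (forcing them onto the labeled block via the kernel hypothesis) is a detail the paper leaves implicit.
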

\begin{proof}
First, note that we are only interested in the convergence of $f_T^c(e_i)$ for $e_i$ not labelled $\pm c$.  Partition $f_T^c$ and $\widetilde{A}_{L,p}$ according to whether $e_i$ is labelled $\pm c$ or not as
\[ f_T^c = \begin{pmatrix} \psi^c \\ \hat{f}_T^c \end{pmatrix} \qquad \text{and} \qquad \widetilde{A}_{L,p} = \begin{pmatrix} A_1 & A_2 \\ A_3 & A_4 \end{pmatrix}. \]
The recursive definition of $f_T^c$ in Algorithm \ref{alg1} can now be rewritten as $\hat{f}_T^c = A_4 \hat{f}_{T-1}^c + A_3 \psi^c$.  Solving for $\hat{f}_T^c$ in terms of $\hat{f}_0^c$ yields
\[ \hat{f}_T^c = (A_4)^k \hat{f}_0^c + \sum_{i=0}^{T-1} (A_4)^i A_3 \psi^c. \]
In order to prove convergence of $\hat{f}_T^c$, it suffices to prove that $A_4$ has only eigenvalues strictly less than 1 in absolute value.  This ensures that $(A_4)^k\hat{f}_0^c$ converges to zero (eliminating dependence on the initial distribution) and that $\sum_{i=0}^{k-1} (A_4)^i A_3 \psi^c$ converges to $(I-A_4)^{-1}A_3 \psi^c$ as $k \to \infty$.  We will prove that $\text{Spec}(A_4) \subset (-1,1)$ by relating $\text{Spec}(A_4)$ to $\text{Spec}(L D_L^{-1})\subset [0, \Lambda]$ as follows. 

First, partition $L$ and $D_L$ similar to $\widetilde{A}_{L,p}$ as
\[ L = \begin{pmatrix} L_1 & L_2 \\ L_3 & L_4 \end{pmatrix} \qquad \text{and} \qquad D_L = \begin{pmatrix} D_1 & 0 \\ 0 & D_4 \end{pmatrix}. \]
so that
\[ A_4 = I - \frac{1-p}{p(K-1)+1} L_4 D_4^{-1}. \]
Hence $\text{Spec}(A_4)$ is determined by $\text{Spec}(L_4 D_4^{-1})$, or to be more specific, $\lambda \in \text{Spec}(L_4 D_4^{-1}) \Leftrightarrow 1-\frac{1-p}{p(K-1)+1} \lambda \in \text{Spec}(A_4)$.  Furthermore, note that $L_4 D_4^{-1}$ and $D_4^{-1/2} L_4 D_4^{-1/2}$ are similar matrices and share the same spectrum.  It turns out that the spectrum of $D_4^{-1/2} L_4 D_4^{-1/2}$ is bounded within the spectrum of $D_L^{-1/2} L D_L^{-1/2}$, which in turn is equal to $\text{Spec}(L D_L^{-1}) \subset [0,\Lambda]$ by similarity.  Let $g$ be an eigenvector of $D_4^{-1/2} L_4 D_4^{-1/2}$ with eigenvalue $\lambda$ and let $g_1, \ldots, g_j$ be an orthonormal basis of eigenvectors of $D_L^{-1/2} L D_L^{-1/2}$ (such a basis exists since it is a symmetric matrix) with eigenvalues $\mu_1, \ldots, \mu_j$.  We can write
\[ \begin{pmatrix} \textbf{0}_c \\ g \end{pmatrix} = \alpha_1 g_1 + \ldots + \alpha_j g_j \]
for some $\alpha_1, \ldots, \alpha_j$, where $\textbf{0}_c$ is the vector of zeros with length equal to the number of edges classified as $\pm c$.
Then 
\begin{align*} 
\alpha_1 \mu_1 g_1 + \ldots + \alpha_j \mu_j g_j &= D_L^{-1/2} L D_L^{-1/2} \begin{pmatrix} \textbf{0}_c \\ g \end{pmatrix} \\
           &= \begin{pmatrix} D_1^{-1/2} L_1 D_1^{-1/2} & D_1^{-1/2} L_2 D_4^{-1/2} \\ D_4^{-1/2} L_3 D_1^{-1/2} & D_4^{-1/2} L_4 D_4^{-1/2} \end{pmatrix} \begin{pmatrix} \textbf{0}_c \\ g \end{pmatrix} \\
           &= \begin{pmatrix} D_1^{-1/2} L_2 D_4^{-1/2}g \\ D_4^{-1/2} L_4 D_4^{-1/2}g \end{pmatrix} \\
           &= \begin{pmatrix} D_1^{-1/2} L_2 D_4^{-1/2}g \\ \lambda g \end{pmatrix}.
\end{align*}
Taking the Euclidean norm of the beginning and ending expressions, we see that
\begin{align*}
\abs{\alpha_1 \mu_1} + \ldots + \abs{\alpha_j \mu_j} &= \norm{\begin{pmatrix} D_1^{-1/2} L_2 D_4^{-1/2}g \\ \lambda g \end{pmatrix}}_2 \\
  &\geq \norm{\lambda g}_2 \\
  &=\lambda(\abs{\alpha_1} + \ldots + \abs{\alpha_j}). 
\end{align*}
Because we assumed that $\mu_i \in [0,\Lambda]$ for all $i$, it would be a contradiction if $\lambda < 0$ or $\lambda > \Lambda$.  The case $\lambda = 0$ is possible if and only if there is a vector in $\ker L$ that is supported on the unlabelled edges.  To see this, note that if $\lambda = 0$ then 
\begin{align*}
\alpha_1^2 \mu_1 + \ldots + \alpha_j^2 \mu_j &= \begin{pmatrix} \textbf{0}_c \\ g \end{pmatrix}^T D_1^{-1/2} L_2 D_4^{-1/2} \begin{pmatrix} \textbf{0}_c \\ g \end{pmatrix} \\
  &= \begin{pmatrix} \textbf{0}_c \\ g \end{pmatrix}^T \begin{pmatrix} D_1^{-1/2} L_2 D_4^{-1/2}g \\ \lambda g \end{pmatrix} \\
  &= 0 
\end{align*}
which implies $\alpha_i \mu_i = 0$ for all $i$ and therefore $\left(\begin{smallmatrix} \textbf{0}_c \\ g \end{smallmatrix} \right)\in \ker L$.  Finally, since we assumed that no vector in $\ker L$ is supported on the unlabelled edges and that $\frac{\Lambda - 2}{2K + \Lambda - 2} < p < 1$, we conclude that $\text{Spec}(L_4 D_4^{-1}) \subset (0, \Lambda]$ and therefore $\text{Spec}(A_4) \subset \left[1-\frac{1-p}{p(K-1)+1} \Lambda, 1 \right) \subset (-1, 1)$.

To see that the solution $\hat{f}_\infty^c = (I-A_4)^{-1}A_3 \psi^c$ does not depend on $p$, note that $I-A_4$ is a submatrix of $\frac{1-p}{p(K-1)+1}L D_L^{-1}$ so that $\frac{p(K-1)+1}{1-p}(I-A_4)$ does not depend on $p$.  Then write $\hat{f}_\infty^c$ as
\[ \hat{f}_\infty^c = \left[\frac{p(K-1)+1}{1-p}(I-A_4)\right]^{-1} \times \frac{1}{1-p}A_3 \psi^c \]
and note that $\frac{p(K-1)+1}{1-p}A_3$ is an \textit{off-diagonal} submatrix of $\frac{p(K-1)+1}{1-p}I - L D_L^{-1}$ and therefore does not depend on $p$ either.
\end{proof}

Note that while the limit $f_\infty^c$ exists, the matrix $I-A_4$ could be ill-conditioned.  In practice, it may be better to approximate $f_\infty^c$ with $f_t^c$ for large enough $t$.  Also, the algorithm will converge faster for smaller values of $p$ and if $\hat{f}_0^c = \textbf{0}$.

\subsection{Experiments}
We use some simulations to illustrate how Algorithm \ref{alg1} works.  
\begin{example}
Figure \ref{fig: A_H0} shows a simplicial complex in which a single oriented edge $e_1$ has been labelled with class $c$ (indicated by the red color) and all other edges are unlabelled.  Figure \ref{fig: A_H1} shows what happens when this single label is propagated $T = 100$ steps using Algorithm \ref{alg1} with $L = L_1^\text{up}$, $p = 0.9$, and with $f_0^c$ equal to the indicator function on $e_1$.  After the $T$ steps have been performed the edges are oriented and labelled according to the sign of $f_k^c$ (if $f_k^c(e_i)=0$ for an oriented edge $e_i$, then that edge is left unoriented and unlabelled in the figure).  Figures \ref{fig: A_H2} and \ref{fig: A_H3} show the same thing with $L = L_1^{\text{down}}$ and $L = L_1$, respectively.  The results using $L_1^\text{up}$ and $L_1^{\text{down}}$ have a clear resemblance to magnetic fields.  When $L = L_1^{\text{down}}$, ``mass'' is preserved which creates multiple vortices where the flow spins around a triangle.  The walk using $L_1^\text{up}$ tries to maintain local consistency of motion, creating sources and sinks in the process.  The full $L_1$ walk strikes somewhat of a balance between the two, resulting in a more circular flow with a single vortex in the lower left.
\end{example}

\begin{figure}
\centering
\begin{subfigure}[b]{0.49\textwidth}
\includegraphics[width=\textwidth]{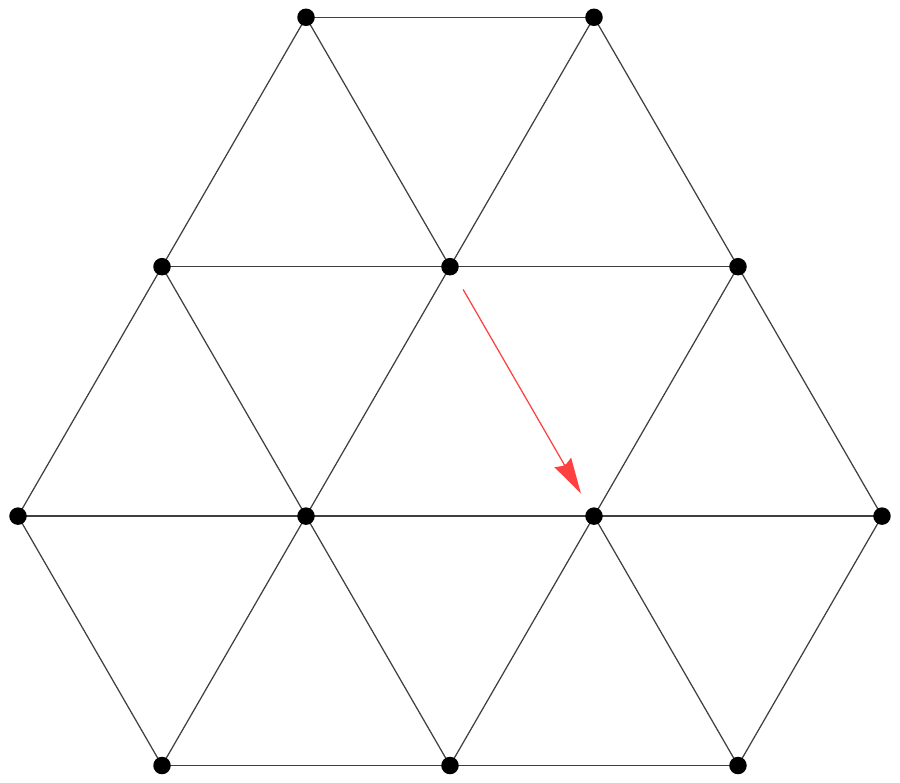}
\caption{A 2-complex with a labelled edge.}
\label{fig: A_H0}
\end{subfigure}
\begin{subfigure}[b]{0.49\textwidth}
\includegraphics[width=\textwidth]{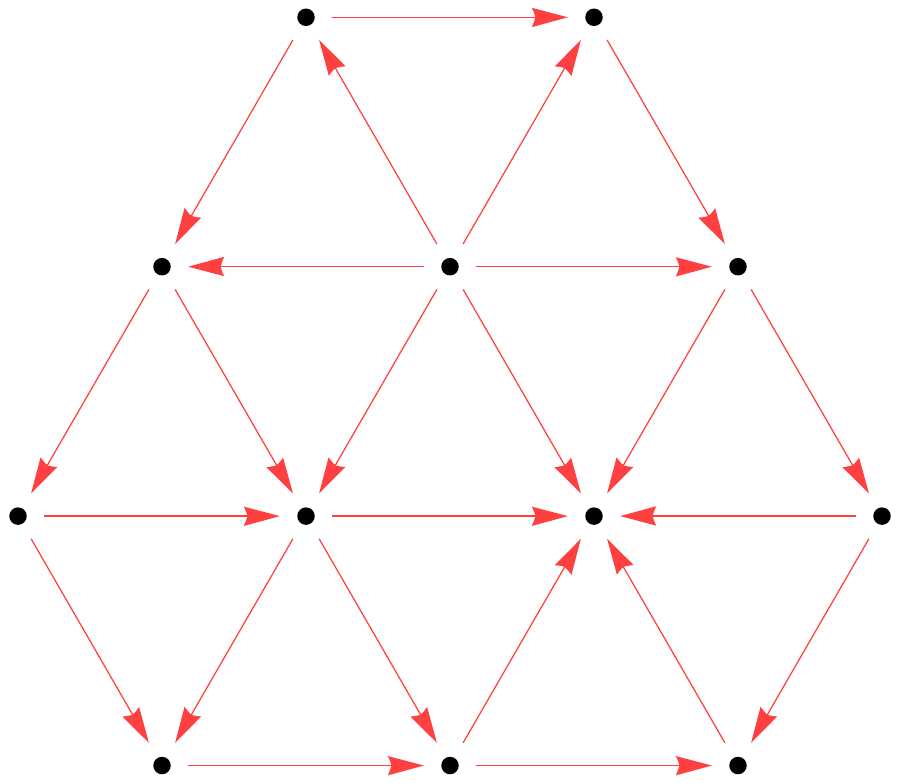}
\caption{Label propagation with $L = L_1^\text{up}$.}
\label{fig: A_H1}
\end{subfigure}
\begin{subfigure}[b]{0.49\textwidth}
\includegraphics[width=\textwidth]{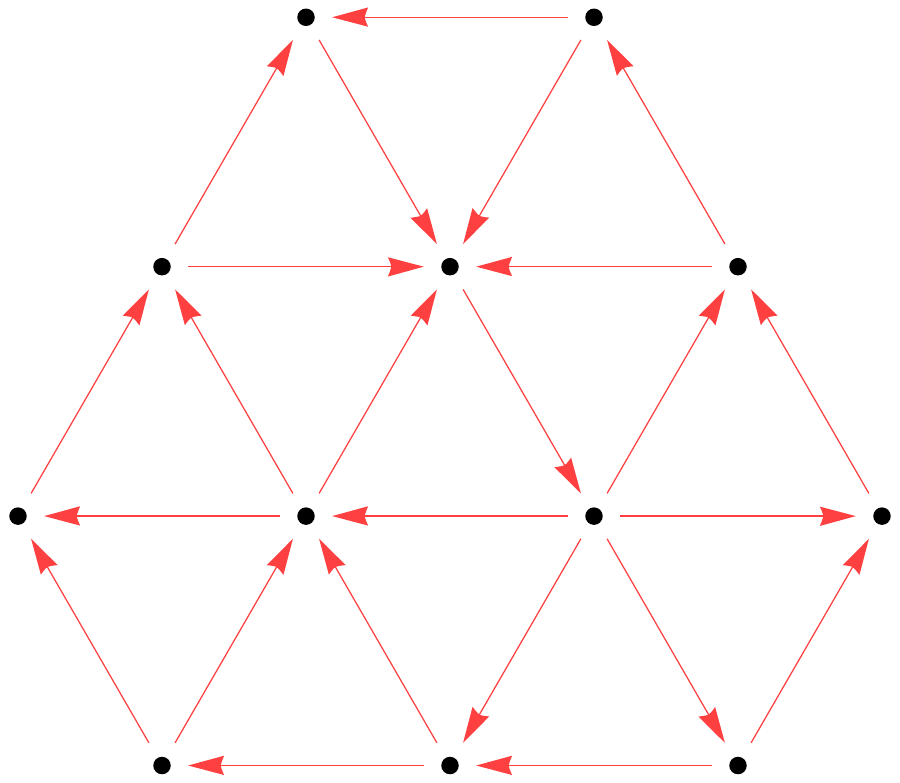}
\caption{Label propagation with $L = L_1^\text{down}$.}
\label{fig: A_H2}
\end{subfigure}
\begin{subfigure}[b]{0.49\textwidth}
\includegraphics[width=\textwidth]{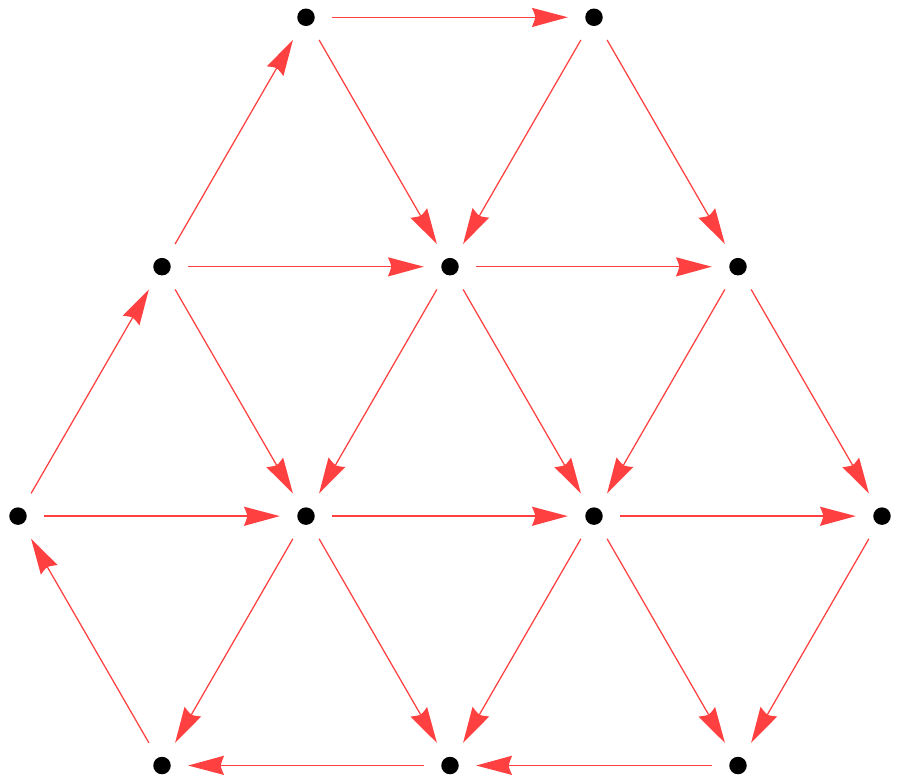}
\caption{Label propagation with $L = L_1$.}
\label{fig: A_H3}
\end{subfigure}
\caption{Edge label propagation with 1 class.}
\label{fig: A_H}
\end{figure}

\begin{example}\label{ex: L1}
Figure \ref{fig: B_H0} shows a simplicial complex in which two edges have been labelled with class $c=1$ (indicated by the red color) and two more edges have been labelled with class $c=2$ (indicated by the blue color).  Figure \ref{fig: B_H1} shows what happens when the labels are propagated $T=1000$ steps using Algorithm \ref{alg1} with $L = L_1$, $p = 0.9$, and $f_0^c$  equal to the indicator function on the oriented edges labelled with classes $c=1,2$.  Every edge is then oriented and labelled according to the sign of $f_{T}^{c=1}$, if $|f_{T}^{c=1}| > |f_{T}^{c=2}|$, or $f_{T}^{c=2}$, if $|f_{T}^{c=1}| < |f_{T}^{c=2}|$. Notice that only a small number of labels are needed to induce large-scale circular motion.  Near the middle, a few blue labels mix in with the red due to the asymmetry of the initial labels.
\end{example}

\begin{figure}
\centering
\begin{subfigure}[b]{\textwidth}
\centering
\includegraphics[width=0.85\textwidth]{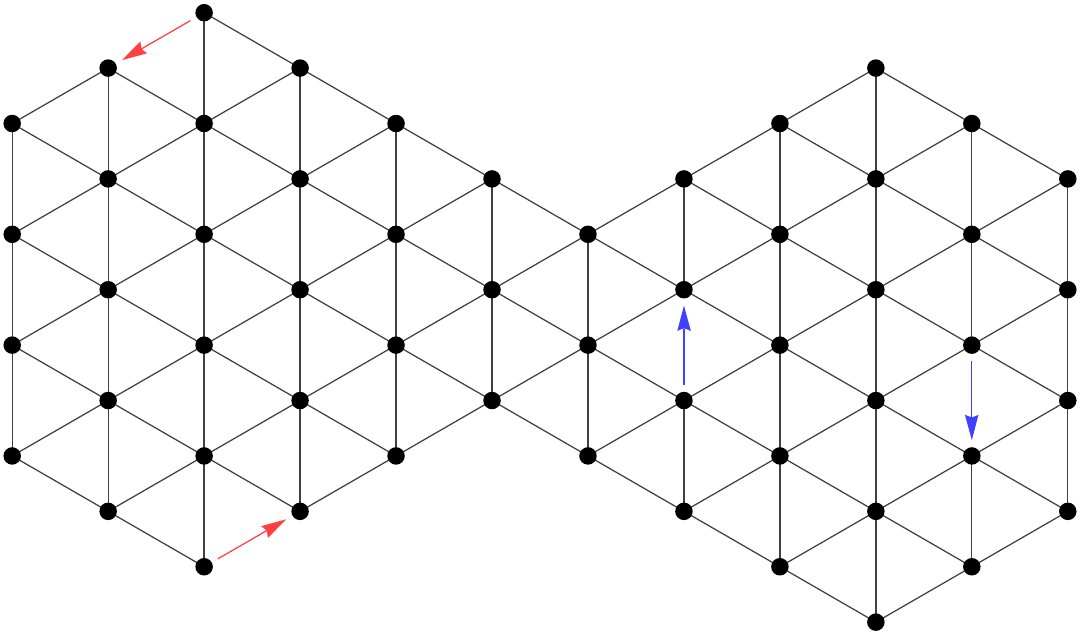}\\
\caption{A 2-complex with two different labels on four edges.\vspace{.2in}}
\label{fig: B_H0}
\end{subfigure}
\begin{subfigure}[b]{\textwidth}
\centering
\includegraphics[width=0.85\textwidth]{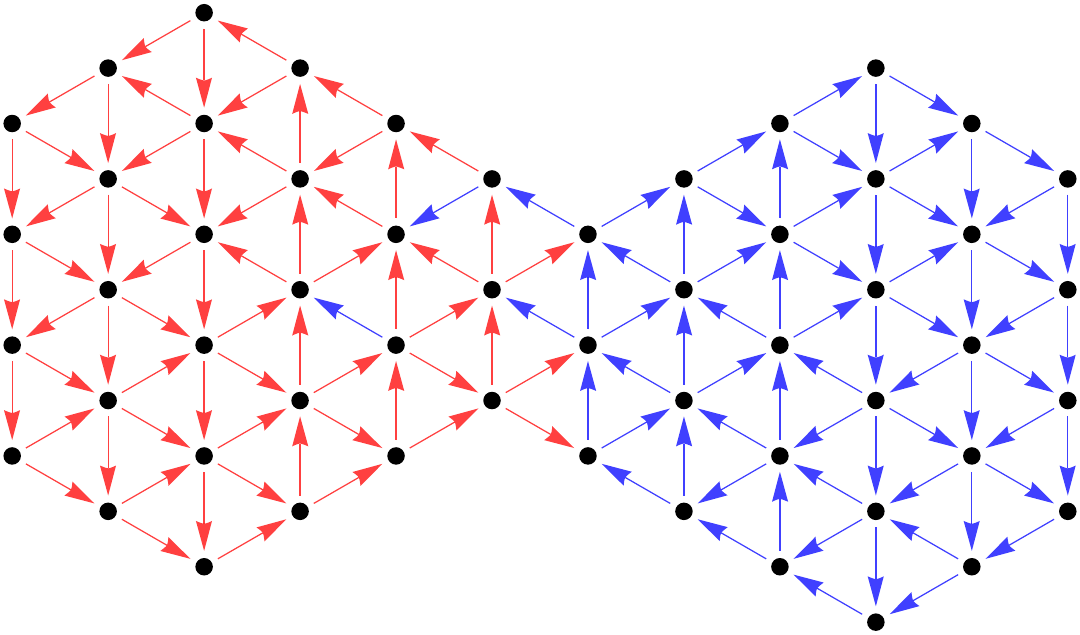}\\
\caption{Label propagation with $L = L_1$.}
\label{fig: B_H1}
\end{subfigure}
\caption{Edge label propagation with two classes.}
\label{fig: B_H}
\end{figure}

\section{Discussion}
In this paper, we introduced a random walk with absorbing states on simplicial complexes.  Given a simplicial complex of dimension $d$, the
relation between the random walk and the spectrum of the $k$-dimensional Laplacian for $1 \leq k \leq d$ was examined. We compared the 
Dirichlet random walk we introduced to the Neumann random walk introduced in Rosenthal and Parzanchevski \cite{parzanchevski2012simplicial}.

There remain many open questions about random walks on simplicial complexes and the spectral theory of higher order Laplacians. Possible future
directions of research include:
\begin{enumerate}
\item[(1)] Is there a Brownian process on a manifold that corresponds to the continuum limit of these new random walks?
\item[(2)] Is it possible to use conditioning techniques from stochastic processes such as Doob's $h$-transform to analyze these walks?
\item[(3)] What applications do these walks have to problems in machine learning and statistics?
\end{enumerate}

\section*{Acknowledgements}
SM would like to thank Anil Hirani, Misha Belkin, and Jonathan Mattingly for useful comments. SM is pleased to acknowledge support from grants NIH (Systems Biology): 5P50-GM081883, AFOSR: FA9550-10-1-0436, and NSF CCF-1049290.  JS would like to thank Kevin McGoff for proofreading and useful comments.  JS is pleased to acknowledge support from NSF grants DMS-1045153 and DMS-12-09155.

\bibliographystyle{plain}
\bibliography{bibliography}

\end{document}